\title{Versal Family of Reductive Groups}
\author{Shahar Dagan}
\begin{document}

\begin{abstract}
    In this paper, we prove Theorem \ref{intthm: versal family reductive}, which establishes the existence of a family of reductive groups, including all reductive groups up to a given rank. In Theorem \ref{thm: Scheme of quasi-split}, we also demonstrate the construction of a similar versal family of quasi-split reductive groups. This result generalizes \cite[Lemma 3.2.1]{AA24} and provides a framework for systematically studying invariants of reductive groups with bounded rank.
\end{abstract}

\maketitle
\section{Introduction} \label{sec: introduction}
Reductive groups form an important class of algebraic groups with a well-established classification theory. One ingredient of this classification is the notion of  rank for a given reductive group. A fundamental tool in algebraic geometry is the construction of families of objects parameterized by a scheme. This is typically achieved by defining a morphism of schemes $X\to S$, where $S$ serves as the parameter space and the fibers of the morphism represent the family of objects. The main result of this paper is the construction of a family of all reductive groups up to a given rank, as described in Theorem \ref{intthm: versal family reductive},

\begin{introtheorem}[see Theorem {\ref{thm: versal family reductive}} below]\label{intthm: versal family reductive}
    For any integer $n>0$, there exists a scheme $\cS _n$ of finite type and a smooth  $\cS_n$-group scheme of finite type $\cR_n\to \cS_n$ such that:
    \begin{enumerate}
        \item For any field $F$ and every $x\in\cS_n(F)$, the group $(\cR_n)|_x$ is connected and reductive of rank at most $n$. In particular, $\cR_n$  is a reductive $\cS_n$-group scheme.
        \item For any connected and reductive group $\mathbf{G}$, of rank at most $n$ over a field $F$,  there exists a point $x\in \cS_n(F)$, such that $\mathbf{G}\cong (\cR_n)|_x$.
    \end{enumerate}
\end{introtheorem}

This result may prove useful in establishing uniform results for all reductive groups of bounded rank. If one finds some natural invariants for certain group schemes, computing such invariants on the scheme of Theorem \ref{intthm: versal family reductive} ensures that these invariants characterize all reductive groups up to a given rank.

This theorem generalizes \cite[Lemma 3.2.1]{AA24}, which established a similar result over finite fields. One may review \cite{AA24} to obtain a concrete example of how such families can be used.

\subsection{Structure of the proof}
Our approach builds on the method presented in \cite[Appendix B]{AA24}, where we encode the classification theory of reductive groups (as presented in \cite{Milne_reductive_groups,Con14} and originally shown in \cite{SGA3}) in the framework of scheme theory. Specifically, we encode all necessary data for classifying a reductive group into the base scheme $\cS_n$ and construct a reductive $\cS_n$-group scheme, $\cR_n$, following the method of classification of reductive groups, as we now describe.

The classification consists of two major steps: first, classifying all split reductive groups by their root data, and second, classifying the forms of split reductive groups using Galois cohomology. This approach is sufficient because any reductive group $\mathbf{G}$ splits over a finite Galois extension (see, e.g., \cite[17.16]{Milne_reductive_groups}). Moreover, it suffices to consider Galois extensions of degree bounded by a function depending only on the rank of $\mathbf{G}$ (see, e.g., \cite[Lemma B.1.1]{AA24}). The classification of forms also breaks into two steps, which we outline below.

The first step is to classify the quasi-split forms of a split reductive group $\mathbf{G}$ over a field $F$ (see the definition in \cite[15.4]{Milne_reductive_groups}). Since $\mathbf{G}$ is split, we can choose a pinning for it (see, e.g., \cite[Definition 1.5.4]{Con14}). This pinning allows us to classify the quasi-split forms of $\mathbf{G}$ that become isomorphic to $\mathbf{G}$ over a finite Galois extension $E/F$ by the Galois cohomology classes of $\Out(\mathbf{G}_E)$, where $\Out(\mathbf{G}_E)$ denotes the outer automorphism group of $\mathbf{G}_E$.

The 1-coboundary condition, together with modding by inner automorphisms, makes the Galois action  on $\Out(\mathbf{G}_E)$ trivial. Therefore, the Galois cohomology classes of $\Out(\mathbf{G}_E)$ is in bijection with the classes of homomorphisms from the Galois group $\Gal(E/F)$ to $\Out(\mathbf{G}_E)$, up to adjunction. Moreover, the pinning allows us to construct an isomorphism from $\Out(\mathbf{G}_E)$ to $\Aut(\fX,\Delta)$, the automorphism group of a based root datum of $\mathbf{G}$. We summarize this paragraph in the following equality
\begin{align*}
    H^1(\Gal(E/F), \Out(\mathbf{G}_E))
     & \cong \Mor(\Gal(E/F),  \Out(\mathbf{G}_E)) / \operatorname{ad}( \Out(\mathbf{G}_E)) \\
     & \cong \Mor(\Gal(E/F), \Aut(\fX, S)) / \operatorname{ad}(\Aut(\fX, S)).
\end{align*}
This equality also show that $H^1(\Gal(E/F), \Out(\mathbf{G}_E)) $ is finite (see, e.g., \cite[Lemma B.3.2]{AA24}).

The second step in the classification of $F$-forms of the split group $\mathbf{G}$, is to classify inner forms of quasi-split forms of $\mathbf{G}$. This is sufficient as any $F$-form has a (unique) quasi-split inner form (see e.g. \cite[19.57]{Milne_reductive_groups}).

Each inner forms of a quasi-split form $\mathbf{G}^{qs}$ of $\mathbf{G}$, has a corresponding class in the Galois cohomology $H^1(\Gal(E/F),\Inn(\mathbf{G}^{qs}_E))$, where $\Inn(\mathbf{G}^{qs}_E)$ is the inner automorphism group of $\mathbf{G}^{qs}_E$. Recall, that two different cohomology classes in $H^1(\Gal(E/F),\Inn(\mathbf{G}^{qs}_E))$ may correspond to the same inner form. Our construction does not avoid this redundancy and may therefore include multiple instances of the same inner form. This is one of the reasons our result is versal and not universal.

We have completed our review of the classification of reductive groups and now discuss how it is implemented in our proof. A naive approach would be to take a disjoint union of all reductive groups over all prime fields (i.e., $\mathbb{Q}$ or $\mathbb{F}_p$). This construction results in a scheme that is not of finite type, and is therefore unlikely to be useful. Instead, we perform the disjoint union only for ingredients of the classification that have a finite set of options and construct a scheme of finite type to handle the remaining ingredients, using the relative theory of reductive group scheme as presented in \cite{Con14}.

As we bound the rank of the reductive groups in our family, the root data, the Galois groups \footnote{Referring to the groups themselves rather than the fields in the corresponding extensions.}, and the cohomology set $H^1(\Gal(E/F), \Out(\mathbf{G}_E))$ have finitely many options. The finiteness of these components is established in \cite[Appendix B]{AA24}.

The remaining ingredients are the Galois cohomology $H^1(\Gal(E/F),\Inn(\mathbf{G}^{qs}_E))$ and the Galois extensions\footnote{This time referring to the fields.}. To encode the Galois extension schematically, we construct for any finite group $\Gamma$, a finite {\'e}tale morphism between finitely presented affine $\bZ$-schemes, $\Psi_{\Gamma} : \cE_{\Gamma} \rightarrow  \cF_{\Gamma}$, representing a family of finite {\'e}tale algebras of dimension $|\Gamma|$ endowed with a $\Gamma$-action. Moreover, for any field $F$ and an {\'e}tale $F$-algebra $E$ in the family described by $\Psi_\Gamma$, we get $E^\Gamma=F$ (see Lemma \ref{lem: fam of finite etale algebras}). These schemes include a point for any Galois extension over any field, such that the Galois group is $\Gamma$. The base scheme $\cF_\Gamma$ includes more points than needed for the classification process, which is another reason our result is only versal.

The definition of $H^1(\Gal(E/F), \Inn(\mathbf{G}^{qs}_E))$ involves a quotient of 1-cocycles by 1-coboundaries. Consequently, the associated functor\footnote{A functor on $E$-schemes mapping a scheme $S$ to $H^1(\Gal(E/F), \Inn(\mathbf{G}^{qs}_E(S)))$.} may not be representable by a scheme. Fortunately, to ensure versality, it suffices to construct a scheme representing 1-cocycles without quotienting by 1-coboundaries. This approach may result in redundant instances of the same form in the final family $\cR_n\to\cS_n$, which is one reason the result is versal rather than universal.

In summary, for any field $F$, an $F$-point in the base scheme $\cS_n$ encodes the following data:
\begin{enumerate}
    \item a based root datum $(\fX,\Delta)$;
    \item a finite group $\Gamma$;
    \item a finite {\'e}tale algebra $E$ over $F$ of rank $|\Gamma|$ with a $\Gamma$-action such that $E^\Gamma=F$;
    \item a homomorphism $\alpha:\Gamma\to\Aut(\fX,\Delta)$;
    \item a 1-cocycle in $Z^1(\Gamma,\Inn(\mathbf{G}^{qs})$), where $\mathbf{G}^{qs}_E$ is the quasi-split reductive group associated to $\fX$ and $\alpha$.
\end{enumerate}

The construction of the final scheme $\cR_n\to\cS_n$, strongly resembles the method of obtaining forms of a split reductive group, given the data described above.

\subsection{Structure of the paper}
The structure of the paper is as follows:
\begin{enumerate}
    \item  In this section we introduce the main notions of the paper.
    \item In Section \S\ref{sec: Preliminaries}, we give a brief review of the required concepts from scheme theory and the classification of reductive groups and reductive group schemes.
    \item  In Section \S\ref{sec: proof of main results}, we prove the main result of the paper. First, we construct a finite {\'e}tale scheme containing points describing finite Galois extensions. Then we construct a versal family of quasi-split reductive groups of a bounded rank. Finally, we prove Theorem \ref{intthm: versal family reductive} constructing a versal family of all reductive groups of a bounded rank.
\end{enumerate}

\subsection{Acknowledgement}
I would like to thank my advisor \textbf{Avraham Aizenbud} for guiding me in this project. I would also like to thank \textbf{Shachar Carmeli} for his valuable comments.

\section{Preliminaries} \label{sec: Preliminaries}
This paper assumes familiarity with scheme theory and the classification of reductive groups over fields. We align with the description of these theories as presented in \cite{Vak17} and \cite{SP} for schemes, and in \cite[Sections \S 1 and \S 7]{Con14} and \cite{Milne_reductive_groups} for reductive groups. Accordingly, in these preliminaries, we mainly review specific lemmas and constructions used in our proof.

We also rely on the relative theory, which extends the study of reductive groups to the setting of group schemes over a non-empty base scheme $S$. This perspective is reviewed in details in \cite{Con14}. In these preliminaries, we focus on the key definitions of the relative theory, in order to clarify the terminology used in our construction.

Informally, the definitions in the relative setting mostly align with the classical ones over any geometric point. Additionally, sub-objects such as Borel subgroups and tori are uniformly defined over the base scheme, in a manner that respects the structure of the reductive group scheme over the base scheme.

\subsection{Scheme theory constructions}
In this subsection, we fix a base scheme $S$ and an $S$-scheme $X$. We review several scheme-theoretic techniques for constructing new $S$-schemes from $X$. Most of these techniques are standard, and this review serves as a reminder and clarifies the terminology  and the notation used.

\begin{definition}
    The \textbf{functor of points} of $X$ is the contra-variant functor $\underline{X}:\mathbf{Sch}_S\to\mathbf{Set}$ defined as
    $$
        \underline{X}(T)\coloneqq\Mor_S(T,X),
    $$
    for any $S$-scheme $T$, and for a morphism $\phi:T\to T'$ of $S$-schemes by
    \[
        \underline{X}(\phi)(f)=f\circ\phi \in \underline{X}(T)
    \]
    for any $f\in \underline{X}(T')$. We refer to elements in $\underline{X}(T)$ as \textbf{$\mathbf{T}$-points} of $X$. If $T=\Spec(A)$ for some ring $A$, we also call elements in $\underline{X}(T)$ \textbf{$\mathbf{A}$-points} of $X$. We often omit the underline symbol and write $X(T)$ for $\underline{X}(T)$.
\end{definition}

Throughout this paper, we use the Yoneda lemma to show that two schemes are isomorphic, by showing that their functors of points are equivalent.

\begin{definition}
    A \textbf{geometric point} of the scheme $X$ is a point  $\overline{x}\in X(k)$, where $k$ is an algebraically closed field.
\end{definition}

\begin{definition}
    A contra-variant functor $F:\mathbf{Sch}_S\to\mathbf{Set}$ is said to be \textbf{representable} by an $S$-scheme $Y$ if $F$ is equivalent to $\underline{Y}$, the functor of points of $Y$. Similarly, a co-variant functor $F:\mathbf{ComRing}\to\mathbf{Set}$ is said to be \textbf{representable} by a scheme $Y$, if the composition of the functor $\Spec:\mathbf{ComRing}\to \mathbf{AffSch}\subseteq\mathbf{Sch}$  with $\underline{Y}:\mathbf{Sch}\to\mathbf{Set}$, the functor of points of $Y$,  is equivalent to $F$.
\end{definition}

In many of the upcoming claims, we define a functor by a notation such as $\underline{F}:\mathbf{Sch}_S\to\mathbf{Set}$, to imply it is representable, under certain assumptions, by an $S$-scheme $F$. Therefore, we are required to be careful and justify any subtraction of the underline symbol.

We now review several methods for constructing schemes in various settings.

\begin{lemma}\label{lem: finite lim of scheme exist}
    The category of (affine) $S$-schemes has all finite limit, i.e., for any finite diagram in $(\textbf{Aff})\textbf{Sch}_S$, the limit exists.
\end{lemma}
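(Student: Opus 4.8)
The plan is to reduce the statement to two cases that are standard, namely finite products and equalizers, since a category with those has all finite limits.

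\medskip

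\textbf{Step 1: Reduce to products and equalizers.} First I would invoke the standard category-theoretic fact that a category has all finite limits if and only if it has a terminal object, binary products, and equalizers (equivalently, finite products and equalizers of pairs of parallel arrows). So it suffices to construct these three in $\mathbf{Sch}_S$ and in $\mathbf{AffSch}_S$. The terminal object of $\mathbf{Sch}_S$ is $S$ itself; the terminal object of $\mathbf{AffSch}_S$ is $S$ when $S$ is affine, and more carefully one works in the comma category over a fixed affine base, so this is unproblematic.

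\medskip

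\textbf{Step 2: Products.} The binary product of two $S$-schemes $X \to S$ and $Y \to S$ is the fiber product $X \times_S Y$, whose existence is a basic theorem of scheme theory (see, e.g., \cite{Vak17} or \cite{SP}). When $X = \Spec A$ and $Y = \Spec B$ are affine over $S = \Spec R$, one has $X \times_S Y = \Spec(A \otimes_R B)$, which is again affine, so $\mathbf{AffSch}_S$ is closed under binary products. Iterating gives all finite products; the empty product is the terminal object handled in Step 1.

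\medskip

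\textbf{Step 3: Equalizers.} Given two morphisms $f, g : X \to Y$ of $S$-schemes, their equalizer can be realized as the fiber product
\[
    \operatorname{Eq}(f,g) = X \times_{(f,g),\, Y \times_S Y,\, \Delta_{Y/S}} Y,
\]
i.e., the pullback of the diagonal $\Delta_{Y/S} : Y \to Y \times_S Y$ along the morphism $(f,g) : X \to Y \times_S Y$. One checks via the functor of points (Yoneda) that this scheme has the correct universal property: a $T$-point of the pullback is a $T$-point $x$ of $X$ together with a $T$-point of $Y$ whose image under the diagonal agrees with $(f(x), g(x))$, which forces $f(x) = g(x)$. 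Since fiber products exist in $\mathbf{Sch}_S$, this equalizer exists. In the affine case, if $X = \Spec A$, $Y = \Spec B$, $S = \Spec R$, then $\operatorname{Eq}(f,g) = \Spec(A / I)$ where $I$ is the ideal generated by $\{f^\sharp(b) - g^\sharp(b) : b \in B\}$, which is again affine; so $\mathbf{AffSch}_S$ is closed under equalizers as well.

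\medskip

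\textbf{Step 4: Conclude.} Having terminal objects, binary products, and equalizers in both $\mathbf{Sch}_S$ and $\mathbf{AffSch}_S$, the category-theoretic principle from Step 1 yields all finite limits, with the limit of a diagram landing in $\mathbf{AffSch}_S$ whenever the diagram does. I do not expect any genuine obstacle here: the only mild subtlety is bookkeeping the terminal object in $\mathbf{AffSch}_S$ (which is why one typically restricts attention to the case where $S$ is affine, or works in the slice category over an affine base), and the verification that the constructed equalizer truly represents the equalizer functor, which is a routine Yoneda argument.
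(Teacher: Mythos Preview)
Your argument is correct and essentially parallel to the paper's, which also reduces to a standard categorical criterion; the only difference is that the paper invokes the version ``terminal object $+$ fibered products $\Rightarrow$ all finite limits'' and then cites the existence of fibered products in $(\mathbf{Aff})\mathbf{Sch}_S$ directly, whereas you use ``terminal object $+$ binary products $+$ equalizers'' and then build equalizers as fiber products along the diagonal. Since your equalizer construction is itself a fibered product, the two routes collapse into one another; the paper's version is simply terser. Your caveat about the terminal object in $\mathbf{AffSch}_S$ is harmless once one reads ``affine $S$-scheme'' as ``$S$-scheme with affine structure morphism,'' in which case $S$ itself (via the identity) is affine over $S$ regardless of whether $S$ is an affine scheme.
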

\begin{proof}
    This is a standard result of scheme theory, that uses the fact that any category with a terminal object and fibered products, has all finite limit. $(\textbf{Aff})\textbf{Sch}_S$ satisfies these conditions - it has all fibered products \cite[Theorem 9.1.1, Exercise 9.1.B.]{Vak17} and $S$ is a terminal object.
\end{proof}
\begin{lemma}\label{lem: limit base change}
    Finite limits of schemes commute with base change. More precisely, let $Y\coloneqq\underset{\to}{\lim}X_i$ be a finite limit of $S$-scheme and $S'$ be an $S$-scheme. Then \[
        Y\times_S S'\cong \underset{\to}{\lim} (X_i\times_S S')
    \]
\end{lemma}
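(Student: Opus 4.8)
The plan is to deduce this formally from the universal properties involved, via the Yoneda lemma, rather than by any explicit construction. The quickest argument is to observe that the base-change functor $(-)\times_S S'\colon\mathbf{Sch}_S\to\mathbf{Sch}_{S'}$ is right adjoint to the functor $\mathbf{Sch}_{S'}\to\mathbf{Sch}_S$ sending an $S'$-scheme $T$ to the same scheme regarded as an $S$-scheme via the structure morphism $S'\to S$ (the adjunction isomorphism $\Mor_{S'}(T,W\times_S S')\cong\Mor_S(T,W)$ is precisely the universal property of the fiber product). Since right adjoints preserve all limits that exist, and the relevant finite limits exist by Lemma \ref{lem: finite lim of scheme exist} (applied over $S$, and over $S'$ after noting that the transition morphisms of $(X_i)$ base change to morphisms of $S'$-schemes), the desired isomorphism follows at once.

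I would, however, spell this out concretely for the paper. Set $Z\coloneqq\underset{\to}{\lim}(X_i\times_S S')$, which exists as an $S'$-scheme by Lemma \ref{lem: finite lim of scheme exist}. For an arbitrary $S'$-scheme $T$, regarded also as an $S$-scheme, apply the universal property of the fiber product (with $W=Y$ and with $W=X_i$) and the universal property of the two limits to obtain natural bijections
\[
    \Mor_{S'}(T,\,Y\times_S S')\;\cong\;\Mor_S(T,\,Y)\;\cong\;\underset{\to}{\lim}\,\Mor_S(T,\,X_i)\;\cong\;\underset{\to}{\lim}\,\Mor_{S'}(T,\,X_i\times_S S')\;\cong\;\Mor_{S'}(T,\,Z).
\]
Thus the functors of points of $Y\times_S S'$ and of $Z$ on $\mathbf{Sch}_{S'}$ are naturally isomorphic, and the Yoneda lemma gives $Y\times_S S'\cong Z$ as $S'$-schemes, which is the assertion.

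The only delicate point is the bookkeeping: one must verify that each bijection in the display is natural in $T$ and compatible with the cone morphisms of the two limit diagrams, so that the composite is an isomorphism of functors $\mathbf{Sch}_{S'}\to\mathbf{Set}$ rather than merely a pointwise bijection. This is entirely routine — naturality is built into each universal property being invoked — but it is the step where a slip would hide; everything else is formal.
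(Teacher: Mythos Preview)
Your proof is correct and takes essentially the same approach as the paper: the paper's proof simply says the isomorphism follows from the Yoneda lemma and a direct calculation on the functor of points, which is exactly what your second paragraph carries out in detail. Your additional framing via the right-adjoint-preserves-limits observation is a nice packaging of the same idea, but the underlying argument is the same.
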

\begin{proof}
    The isomorphism follows by the Yoneda lemma and a direct calculation on the functor of points.
\end{proof}

Now we describe how a group acts on a scheme, and how it allows us to construct new schemes.

\begin{definition}\label{def: group action on  a scheme}
    Let $\Gamma$ be a group. A \textbf{$\mathbf{\Gamma}$-action} on $X$ over $S$, is defined as a group homomorphism $\rho: \Gamma \to \Aut_S(X)$. In this case, the functor of points of $X$ refines to a functor $\underline{X}:\mathbf{Sch}_S\to\mathbf{Set}_\Gamma$.
\end{definition}

\begin{remark}\label{rem: functor to action on scheme}
    Notice, that if $\underline{X}:\mathbf{Sch}_S\to\mathbf{Set}_\Gamma$ composed with the forgetful functor $\mathbf{Set}_\Gamma\to\mathbf{Set}$ is representable by a scheme $X$, then by functoriality, we get a $\Gamma$-action on $X$.
\end{remark}

\begin{definition}\label{def: fixed points/quotient functor/space}
    Let $\Gamma$ be a  group, $\rho:\Gamma\to\Aut(X)$ a $\Gamma$-action on $X$ over $S$, and $T$ some $S$-scheme. Define the functor $\underline{X^{\rho(\Gamma)}}: \mathbf{Sch}_S \to \mathbf{Set}$ by
    $$\underline{X^{\rho(\Gamma)}}(T)=(\underline{X}(T))^{\rho(\Gamma)},$$
    which maps an $S$-scheme $T$ to the fixed points of the action on the functor of points. If this functor is representable, we denote the representing scheme by $X^{\rho(\Gamma)}$.

    Similarly, define the functor $\underline{X/\rho(\Gamma)}: \mathbf{Sch}_S \to \mathbf{Set}$ as the Zariski sheafification of the following presheaf in the Zariski topology
    $$\underline{X/\rho(\Gamma)}(T)=\{\text{orbits of the action of $\Gamma$ on $\underline{X}(T)$}\}.$$
    Similarly, if this functor is representable, we denote the representing scheme by $X/\rho(\Gamma)$.

    We sometimes omit $\rho$ from the notations if it is clear from context.
\end{definition}

We show in Lemma \ref{lem: fixed points scheme} that if the acting group is finite, then $\underline{X^{\rho(\Gamma)}}$ is always representable. However, the $\underline{X/\rho(\Gamma)}$ may not be representable. Nevertheless, in the case where the scheme is finite and {\'e}tale, and the group acting on it is finite, $\underline{X/\rho(\Gamma)}$ is representable (see e.g. \cite[Lemma 58.5.2]{SP}).

\begin{lemma}\label{lem: fixed points scheme}
    The $\underline{X^\Gamma}$, is representable and compatible with base change. More precisely, $\underline{X^\Gamma}$ is represented by an $S$-scheme $X^\Gamma$, and for any $S$-scheme $S'$ there is an equivalence of functors
    $$
        \underline{(X\times_S S')^\Gamma}\cong \underline{X^\Gamma\times_S S'}.
    $$
    Moreover, if $X$ is affine then so does $X^\Gamma$.
\end{lemma}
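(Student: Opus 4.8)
The plan is to prove representability of $\underline{X^\Gamma}$ by realizing the fixed-point functor as a finite limit of copies of $X$, and then invoke Lemmas \ref{lem: finite lim of scheme exist} and \ref{lem: limit base change}. Concretely, since $\Gamma$ is finite, I would write down the equalizer-type diagram whose limit computes fixed points: consider the two maps $X \rightrightarrows \prod_{\gamma \in \Gamma} X$, where one map is the diagonal $x \mapsto (x)_{\gamma}$ and the other is $x \mapsto (\rho(\gamma)(x))_{\gamma}$ (or, slightly more economically, index the product over a generating set of $\Gamma$). The equalizer of these two morphisms represents, on $T$-points, precisely the set of $t \in \underline{X}(T)$ with $\rho(\gamma)(t) = t$ for all $\gamma$, i.e. $\underline{X^\Gamma}(T)$. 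Since $\Gamma$ is finite this is a finite diagram in $\mathbf{Sch}_S$, so by Lemma \ref{lem: finite lim of scheme exist} the limit exists and represents $\underline{X^\Gamma}$; I would phrase the identification of $T$-points via the Yoneda lemma, exactly as the excerpt does repeatedly.

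For the base-change compatibility, once $X^\Gamma$ is exhibited as a finite limit $\underset{\to}{\lim} X_i$ of $S$-schemes (here the $X_i$ are just copies of $X$ and the structure maps are the identity and the $\rho(\gamma)$), Lemma \ref{lem: limit base change} gives directly
\[
    X^\Gamma \times_S S' \cong \underset{\to}{\lim}(X_i \times_S S').
\]
Then I would observe that the right-hand diagram is exactly the equalizer diagram computing $(X \times_S S')^{\Gamma}$ for the base-changed $\Gamma$-action (the action on $X \times_S S'$ is obtained from the action on $X$ by functoriality, as in Remark \ref{rem: functor to action on scheme}), because forming the product $\prod_\gamma X$ and the diagonal/action maps all commute with the base change $-\times_S S'$. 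Hence $(X \times_S S')^\Gamma \cong X^\Gamma \times_S S'$.

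The affineness statement follows from the same construction: by Lemma \ref{lem: finite lim of scheme exist}, the category of affine $S$-schemes also has all finite limits, and the limit taken there agrees with the limit taken in all $S$-schemes (a finite limit of affine schemes over an affine-friendly diagram is affine — indeed the equalizer of two maps of affine schemes is affine, being a closed subscheme of a finite product of affine schemes). So if $X$ is affine, every $X_i$ in the diagram is affine and the equalizer $X^\Gamma$ is affine.

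I do not expect a serious obstacle here; the one point requiring mild care is checking that the equalizer diagram's $T$-points really are the $\rho(\Gamma)$-fixed points and not something weaker — this is where one must use that $\Gamma$ is finite (so that "fixed by all $\gamma$" is a finite conjunction of conditions) and, if one indexes by generators, that fixing the generators forces fixing all of $\Gamma$ since $\rho$ is a homomorphism. The other minor bookkeeping item is being explicit that the $\Gamma$-action on $X \times_S S'$ used in the statement is the natural one induced by functoriality, so that the two sides of the claimed base-change isomorphism are comparing the same thing; this is immediate from Remark \ref{rem: functor to action on scheme} but worth a sentence.
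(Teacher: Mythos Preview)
Your proposal is correct and follows essentially the same approach as the paper: the paper's proof is a two-line remark that representability and affineness are immediate from Lemma~\ref{lem: finite lim of scheme exist} (finite limits exist in $\mathbf{Sch}_S$ and $\mathbf{AffSch}_S$) and that base-change compatibility follows from Lemma~\ref{lem: limit base change}. You have simply made explicit the finite diagram (the equalizer of the diagonal and the action map) that the paper leaves implicit.
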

\begin{proof}
    Representability and preservation of affiness are immediate from \ref{lem: finite lim of scheme exist}. The compatibility with base change follows from \ref{lem: limit base change}.
\end{proof}

\begin{lemma}\label{lem: qoutient of etale}
    Let $\Gamma$ be a group, $\rho:\Gamma\to\Aut(X)$ a $\Gamma$-action on $X$ over $S$, and assume $X$ is finite and {\'e}tale over $S$. Then $\underline{X/\Gamma}$ is representable by a finite and {\'e}tale scheme $X/\Gamma$.

    Moreover, this construction is preserved under base change. More precisely, if $S'$ is an $S$ scheme, then the functor $\underline{(X\times_S S')/\Gamma}$ is representable by $(X/\Gamma)\times_S S'$.  In addition, if $X=\Spec(B)$ then $X/\Gamma\cong \Spec(B^\Gamma)$.
\end{lemma}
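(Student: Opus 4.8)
The plan is to reduce everything to the affine case and then to an explicit algebraic construction. First I would observe that the question is local on $S$ for the Zariski topology: the functor $\underline{X/\Gamma}$ is already defined as a Zariski sheafification, so representability can be checked after covering $S$ by affine opens. Since $X$ is finite over $S$, its preimage over an affine open $\Spec A \subseteq S$ is again affine, say $\Spec B$, with $B$ a finite, locally free (because étale implies flat, and finite flat over a ring means finite locally free) $A$-algebra that is also étale over $A$. The $\Gamma$-action on $X$ restricts to a $\Gamma$-action on $\Spec B$ (possibly after shrinking $S$ so the open is $\Gamma$-stable — but here $\Gamma$ acts over $S$ fixing $S$ pointwise, so every open preimage of an affine in $S$ is automatically $\Gamma$-stable), hence to a $\Gamma$-action on $B$ by $A$-algebra automorphisms. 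So it suffices to show that $\underline{\Spec B / \Gamma}$ is represented by $\Spec(B^\Gamma)$, that $B^\Gamma$ is finite étale over $A$, and that this is compatible with base change $A \to A'$; the local statements then glue to give the global scheme $X/\Gamma$ and the global claims, by uniqueness of representing objects and the sheaf property.

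For the affine claim I would invoke the structure theory of finite étale algebras over a ring together with descent, exactly as in \cite[Lemma 58.5.2]{SP} (or the discussion of the finite étale site and quotients therein). Concretely: finite étale $A$-algebras correspond, after base change to a strict henselization or via the fundamental group formalism, to finite sets with continuous $\pi_1$-action; a finite étale algebra with $\Gamma$-action corresponds to a finite $\pi_1$-set with commuting $\Gamma$-action, and $\Spec(B^\Gamma)$ corresponds to the quotient $\pi_1$-set. One then checks that $\Spec(B^\Gamma)$ represents the sheaf-quotient functor: for any $A$-algebra $C$, a $\Gamma$-invariant element of $\Mor(\Spec C, \Spec B) = \Hom_A(B, C)$, locally on $\Spec C$, factors uniquely through $B^\Gamma$, and conversely — the point being that $B \to B^\Gamma$ is, fppf- (indeed étale-) locally on the base, a projection of a product of copies of the structure sheaf onto the sub-product of $\Gamma$-orbit-diagonals, for which the universal property is transparent. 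Finiteness and étaleness of $B^\Gamma$ over $A$ can be checked fppf-locally on $\Spec A$, where $B$ becomes a finite product $\prod_{\Gamma\text{-set }I} A$ and $B^\Gamma = \prod_{I/\Gamma} A$ is visibly finite étale.

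The compatibility with base change is then the assertion that formation of $B^\Gamma$ commutes with $A \to A'$, i.e. $(B \otimes_A A')^\Gamma \cong B^\Gamma \otimes_A A'$. This is not true for an arbitrary module-theoretic invariants functor, but it holds here because, fppf-locally on $\Spec A$, $B$ is a product over a $\Gamma$-set and taking invariants is just discarding non-diagonal factors, an operation that is manifestly stable under arbitrary base change; alternatively, $B^\Gamma$ is an $A$-direct summand of $B$ (the averaging-type splitting exists étale-locally even without $|\Gamma|$ invertible, since the splitting is combinatorial, not an averaging) and direct summands commute with $\otimes_A A'$. Combined with Lemma \ref{lem: limit base change}-style Yoneda bookkeeping for the sheafified functor, this gives $\underline{(X \times_S S')/\Gamma} \cong (X/\Gamma)\times_S S'$.

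The main obstacle I anticipate is the precise verification that $\Spec(B^\Gamma)$ represents the \emph{sheafified} orbit functor rather than the naive orbit presheaf: one must show that a $T$-point of $X/\Gamma$, which by definition is only Zariski-locally on $T$ an orbit of a $T$-point of $X$, actually glues to a genuine $T$-point of $\Spec(B^\Gamma)$, and that no information is lost or doubled. This is where the finite-étale hypothesis does real work — it guarantees that $B$ is locally (on the base) a product indexed by a $\Gamma$-set, so that orbits are represented by honest idempotents in $B^\Gamma$, and the sheaf condition is exactly what lets the local orbit-data descend. I would handle this by the argument of \cite[Lemma 58.5.2]{SP} and not reprove it from scratch.
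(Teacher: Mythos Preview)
Your proposal is correct and, like the paper, invokes \cite[Lemma 58.5.2]{SP} for the representability and finite-\'etaleness of $X/\Gamma$; the affine identification $X/\Gamma\cong\Spec(B^\Gamma)$ is also handled the same way (the paper just cites the anti-equivalence of affine schemes and rings). The genuine difference is in the base-change step. The paper gives a short Yoneda-style argument: an element of $\underline{(X\times_S S')/\Gamma}(T)$ is treated as a $\Gamma$-orbit of morphisms $T\to X\times_S S'$, projected to an orbit of morphisms $T\to X$, hence to a map $T\to X/\Gamma$, and finally paired with the $S'$-structure of $T$ to land in $(X/\Gamma)\times_S S'$; the inverse is built similarly. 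You instead argue algebraically that $(B\otimes_A A')^{\Gamma}\cong B^{\Gamma}\otimes_A A'$, justified by the \'etale-local splitting $B\cong\prod_I A$ with $\Gamma$ permuting $I$, so that taking invariants is a combinatorial projection onto orbit-diagonals and $B^{\Gamma}$ is an $A$-direct summand of $B$. Your route is longer but more explicit about why the sheafification in Definition~\ref{def: fixed points/quotient functor/space} causes no trouble---a point the paper's bijection glosses over by speaking of a $T$-point as a single global $\Gamma$-orbit rather than a Zariski-local one. Conversely, the paper's argument is cleaner to write and works uniformly without reducing to the affine case first.
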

\begin{proof}
    Representability is proved in \cite[Lemma 58.5.2]{SP}. The isomorphism to $\Spec (B^\Gamma)$ follows from the anti-equivalence between affine schemes and commutative rings (see e.g. \cite[Exercise 6.3.D]{Vak17}). Therefore, we are only left to check compatibility with base change.

    Let $T$ be an $S'$-scheme, and $x'\in\underline{(X\times_S S')/\Gamma}(T)$. By definition, $x'$ corresponds to a $\Gamma$-orbit of $S'$-morphisms $\{x'_i:T\to X\times_S S'\}$. Now, as the $\Gamma$-action is over $S$, we get that $\{p_X\circ x'_i:T\to X\}$, where $p_X:X\times_S S'\to X$ is the projection, is also a $\Gamma$-orbit of morphism over $S$.  This corresponds to a morphism $x:T\to X/\Gamma$.  The morphism $x$, together with the $S'$-structure of $T$ induce an morphism $T\to(X/\Gamma)\times_S S'$.

    On the other hand, let $y:T\to (X/\Gamma)\times_S S'$ be an $S'$-morphism. Notice, that $p_{X/\Gamma}\circ y$ is a morphism from $T$ to $X/\Gamma$, where $p_{X/\Gamma}:(X/\Gamma)\times_S S'\to (X/\Gamma)$ is the projection. Therefore,  $p_{X/\Gamma}\circ y$ corresponds to a $\Gamma$-orbit of $S$-morphisms $\{y_i:T\to X\}$. Now, each $y_i$, together with the $S'$-structure of $T$ induce an $S'$-morphism $\{y_i':T\to X\times_S S'\}$. This set is a $\Gamma$-orbit by the induced $\Gamma$-action on $X\times_S S'$, hence an element in $\underline{(X\times_S S')/\Gamma}(T)$.

    We just defined a bijection, which is functorial in $T$, hence an equivalence of functors.
\end{proof}

In the next part, for any two $S$-schemes $X$ and $Y$, we define a functor from $\mathbf{Sch}_S\to\mathbf{Set}$ describing the $S$-morphism between $X$ and $Y$. Moreover, we present conditions under which it is representable, and the representing scheme is affine and of finite type.

\begin{definition}[see {\cite[Definition 3.4]{AA22}}]\label{def: internal hom}
    Let $Y$ be another $S$-scheme. Define the functor $\underline{X{}^{\wedge}_{S}Y}:\mathbf{Sch}_S\to \mathbf{Set}$ by
    $$\underline{X{}^{\wedge}_{S}Y}(T)=\Mor_S(Y\times_S T,X),$$
    for any $S$-scheme $T$.

    We call $\underline{X{}^{\wedge}_{S}Y}$ the \textbf{internal morphism functor}, and if it is representable by an $S$-scheme, we denote the representable scheme by $X{}^{\wedge}_S Y$ and call it the \textbf{internal morphism space}. When $Y=\Spec(B)$ and $S=\Spec(A)$ we sometimes write $X_{B/A}\coloneqq X{}^{\wedge}_{\Spec(A)}\Spec(B)$.
\end{definition}

\begin{remark}\label{rem: int hom weil restrict}
    Notice, that the internal morphism functor is equivalent to a Weil restriction of the fibered product, i.e.$$
        \underline{X{}^{\wedge}_{S}Y} \cong \Res_{Y/S}(X\times_SY).$$
\end{remark}

\begin{lemma}\label{lem: internal hom repr}
    If $Y$ is finite and {\'e}tale, and $X$ is $S$-affine then $\underline{X{}^{\wedge}_{S}Y}$ is representable by an affine scheme of finite type.
\end{lemma}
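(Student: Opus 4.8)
The plan is to identify the functor with a Weil restriction and then build the representing scheme by hand from a presentation of $X$. By Remark~\ref{rem: int hom weil restrict} we have $\underline{X{}^{\wedge}_{S}Y}\cong\Res_{Y/S}(X\times_S Y)$, and since $X\times_S Y\to Y$ is affine (base change of $X\to S$) while $Y\to S$ is finite {\'e}tale, hence finite locally free, this is the classical situation in which a Weil restriction is representable by an affine $S$-scheme. I would nevertheless give a direct construction, as it also makes the finiteness claim transparent; note that only the finite locally free structure of $Y$ is used.

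The first step is to reduce to the case $S=\Spec A$ affine with $Y=\Spec B$ and $B$ free over $A$ of some rank $n$. The functor $\underline{X{}^{\wedge}_{S}Y}$ is a Zariski sheaf on $\mathbf{Sch}_S$ (a Zariski cover $\{T_i\to T\}$ pulls back to a Zariski cover $\{Y\times_S T_i\to Y\times_S T\}$, and $\Mor_S(-,X)$ is a sheaf), and for an open immersion $U\hookrightarrow S$ its restriction to $\mathbf{Sch}_U$ is canonically the internal morphism functor $\underline{X_U{}^{\wedge}_{U}Y_U}$ of the restrictions $X_U=X\times_S U$ and $Y_U=Y\times_S U$: if $T$ is a $U$-scheme then $Y\times_S T=Y_U\times_U T$, and a morphism $Y_U\times_U T\to X$ over $S$ lands over $U$, hence factors uniquely through $X_U$. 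Choosing an affine open cover $\{S_i=\Spec A_i\}$ of $S$ over which $Y$ becomes free (possible since a finite {\'e}tale algebra is finite locally free), the schemes representing $\underline{X{}^{\wedge}_{S}Y}$ over the various $S_i$ agree over the overlaps by the Yoneda lemma and uniqueness of representing objects, hence glue to a scheme $W\to S$ with $\underline{W}\cong\underline{X{}^{\wedge}_{S}Y}$; moreover being affine over $S$ and being of finite type over $S$ may be checked on the $S_i$, so it suffices to treat one affine piece.

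With $S=\Spec A$, $Y=\Spec B$, and $B$ free over $A$ on a basis $e_1,\dots,e_n$ with structure constants $e_ie_j=\sum_k m_{ij}^k e_k$ (so $m_{ij}^k\in A$), and $X=\Spec R$ with a presentation $R=A[x_\lambda:\lambda\in\Lambda]/(f_\mu:\mu\in M)$, note that for every $A$-algebra $C$ the set $\underline{X{}^{\wedge}_{S}Y}(\Spec C)=\Mor_S(\Spec(B\otimes_A C),X)$ is the set of $A$-algebra homomorphisms $R\to B\otimes_A C$, and that $B\otimes_A C$ is free over $C$ on $e_1,\dots,e_n$. I would then introduce variables $z_{\lambda,i}$ for $\lambda\in\Lambda$ and $1\le i\le n$; perform the substitution $x_\lambda\mapsto\sum_i z_{\lambda,i}e_i$ in each $f_\mu$ to obtain an element of $A[z_{\lambda,i}]\otimes_A B$; expand it in the basis $(e_k)$ using the structure constants, say as $\sum_k\widetilde f_{\mu,k}e_k$ with $\widetilde f_{\mu,k}\in A[z_{\lambda,i}]$; and set $\widetilde R:=A[z_{\lambda,i}]/(\widetilde f_{\mu,k}:\mu\in M,\ 1\le k\le n)$. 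For any $A$-algebra $C$, a tuple $(c_{\lambda,i})$ in $C$ with $\widetilde f_{\mu,k}(c)=0$ for all $\mu$ and $k$ is precisely the datum of an $A$-algebra homomorphism $R\to B\otimes_A C$ sending $x_\lambda\mapsto\sum_i c_{\lambda,i}e_i$ — since the vanishing of all the coordinates $\widetilde f_{\mu,k}(c)$ is exactly the vanishing of the image of $f_\mu$ in $B\otimes_A C$ — and conversely; this bijection is functorial in $C$, so $\Spec\widetilde R$ represents $\underline{X{}^{\wedge}_{S}Y}$ over $\Spec A$ by the Yoneda lemma. It is affine by construction, and if $X\to S$ is of finite type then $\Lambda$ may be taken finite, so $\widetilde R$ is generated over $A$ by the finitely many $z_{\lambda,i}$ and is therefore of finite type; gluing over the $S_i$ then completes the proof.

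I expect the main (and only mild) obstacle to be the passage to a non-affine base: finite {\'e}tale gives merely local freeness of $Y$, so one has to localize on $S$ and then check that the restricted functor is again an internal morphism functor before gluing back; the rest is bookkeeping — in particular tracking how the structure-constant expansion turns the relations $f_\mu$ into the relations $\widetilde f_{\mu,k}$. I would also flag that the finite-type part of the conclusion genuinely requires $X\to S$ to be of finite type: for instance, if $X=\Spec(A[x_1,x_2,\dots])$ and $Y=\Spec(A\times A)$ is the trivial degree-two cover of $S=\Spec A$, then $\underline{X{}^{\wedge}_{S}Y}\cong X\times_S X$, which is affine but not of finite type over $S$. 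So this hypothesis should be understood as part of the statement.
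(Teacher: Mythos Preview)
Your argument is correct. Both you and the paper begin with Remark~\ref{rem: int hom weil restrict} to identify the functor with $\Res_{Y/S}(X\times_S Y)$, but from there the routes diverge: the paper simply quotes \cite[Proposition~4.4]{scheiderer2006real} for representability by an affine scheme (using that $Y\to S$ is finite locally free and $X\times_S Y\to Y$ is affine) and \cite[Lemma~3.5(2)]{AA22} for the finite-type assertion, whereas you unpack the construction explicitly---reducing to an affine base with $B$ free over $A$, introducing coordinates $z_{\lambda,i}$ via a basis, and expanding the relations $f_\mu$ along the structure constants. Your approach is more self-contained and makes the finite-type claim visibly depend on a finite presentation of $X$; the paper's version is shorter but opaque to a reader without the cited sources. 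Either is fine.

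Your closing observation is also well taken: the lemma as stated omits the hypothesis that $X\to S$ be of finite type, and your example $X=\Spec A[x_1,x_2,\dots]$ with $Y=\Spec(A\times A)$ shows the finite-type conclusion genuinely fails without it. In the paper this hypothesis is implicitly supplied by the reference to \cite[Lemma~3.5(2)]{AA22}, and in every application within the paper $X$ is in fact of finite type, so the omission is harmless in context---but you are right to flag it.
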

\begin{proof}
    By Remark \ref{rem: int hom weil restrict}, $
        \underline{X{}^{\wedge}_{S}Y} \cong \Res_{Y/S}(X\times_SY)$ and by \cite[Proposition 4.4]{scheiderer2006real}, as $Y$ is finite locally free over $S$ (see \cite[Definition 29.48.1]{SP}) and $X\times_s Y$ is affine, $\Res_{Y/S}(X\times_SY)$ is representable by an affine scheme. Moreover, by \cite[Lemma 3.5(2)]{AA22} it is of finite type.
\end{proof}

\begin{lemma}\label{lem: internal hom base change}
    The internal morphism space is preserved under base change. Namely, let $S'$ and $Y$ be $S$-schemes. Denote $X'\coloneqq X\times_S S'$ and $Y'\coloneqq Y\times_S S'$.  If $\underline{X{}^{\wedge}_S Y}$ is representable by an $S$-scheme $X{}^{\wedge}_S Y$, then  $\underline{X'{}^{\wedge}_{S'} Y'}:\mathbf{Sch}_{S'}\to\mathbf{Set}$ is representable by $(X{}^{\wedge}_S Y)\times_{S} S'$ as an $S'$-scheme.
\end{lemma}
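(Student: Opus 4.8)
The statement to prove is Lemma~\ref{lem: internal hom base change}: the internal morphism space is preserved under base change.

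The plan is to prove this directly via the Yoneda lemma, by showing that the functor of points of $(X{}^{\wedge}_S Y)\times_S S'$ is equivalent to $\underline{X'{}^{\wedge}_{S'} Y'}$ as functors on $\mathbf{Sch}_{S'}$. First I would fix an $S'$-scheme $T$ and unwind both sides. On the one hand, by the universal property of the fibered product and the assumed representability of $\underline{X{}^{\wedge}_S Y}$, an element of $\underline{(X{}^{\wedge}_S Y)\times_S S'}(T)$ is the same as an $S$-morphism $T\to X{}^{\wedge}_S Y$ together with the given $S'$-structure map $T\to S'$; the $S$-morphism $T\to X{}^{\wedge}_S Y$ is in turn, by definition of the internal morphism functor, an $S$-morphism $Y\times_S T\to X$. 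On the other hand, an element of $\underline{X'{}^{\wedge}_{S'} Y'}(T)$ is by Definition~\ref{def: internal hom} an $S'$-morphism $Y'\times_{S'} T\to X'$, i.e. a morphism $(Y\times_S S')\times_{S'} T\to X\times_S S'$ over $S'$.

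The key step is the canonical identification $Y'\times_{S'} T \cong Y\times_S T$ of $S$-schemes, which holds because $T$ is an $S'$-scheme and $Y' = Y\times_S S'$, so $(Y\times_S S')\times_{S'} T \cong Y\times_S (S'\times_{S'} T)\cong Y\times_S T$ by associativity of fibered products. Under this identification, giving an $S'$-morphism $Y'\times_{S'} T\to X' = X\times_S S'$ is the same, by the universal property of $X\times_S S'$, as giving an $S$-morphism $Y\times_S T\to X$ together with a compatible map $Y\times_S T\to S'$; but the latter is forced to be the composite $Y\times_S T\to T\to S'$ and hence carries no extra data. This matches exactly the description of $\underline{(X{}^{\wedge}_S Y)\times_S S'}(T)$ obtained above. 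I would then note that all the identifications made are natural in $T$, since they are built from the universal properties of fibered products, so we obtain an equivalence of functors $\mathbf{Sch}_{S'}\to\mathbf{Set}$, and Yoneda gives the claimed isomorphism of $S'$-schemes.

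I do not expect a serious obstacle here; the only point requiring a little care is bookkeeping the structure morphisms correctly — in particular checking that the ``extra'' map to $S'$ appearing when one applies the universal property of $X\times_S S'$ is automatically the canonical one and so does not add data — and keeping straight which fibered products are taken over $S$ versus over $S'$. An alternative, essentially equivalent, route would be to invoke Remark~\ref{rem: int hom weil restrict} together with the standard compatibility of Weil restriction with base change (if $Z$ is a $Y$-scheme then $\Res_{Y/S}(Z)\times_S S' \cong \Res_{Y'/S'}(Z\times_S S')$), applied to $Z = X\times_S Y$; but since the functor-of-points computation is short and self-contained, I would present that directly.
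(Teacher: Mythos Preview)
Your proposal is correct and follows essentially the same approach as the paper: a direct Yoneda argument comparing $T$-points, hinging on the canonical identification $Y'\times_{S'}T\cong Y\times_S T$ and then unpacking the universal property of $X\times_S S'$. The paper carries this out by writing down the bijection explicitly in both directions (projecting to $X{}^{\wedge}_S Y$ and then to $X$, and conversely), whereas you phrase the same content slightly more conceptually via the universal properties, but the arguments are the same.
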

\begin{proof}
    Let $T$ be an $S'$-scheme. By definition,  an $S'$-morphism $x:T\to(X{}^{\wedge}_S Y)\times_{S} S'$, is uniquely defined by its composition with the projection on $p_{(X{}^{\wedge}_S Y)}:(X{}^{\wedge}_S Y)\times_{S} S'\to X{}^{\wedge}_S Y$. Notice, that by representability, $p_{(X{}^{\wedge}_S Y)}\circ x$ induce a unique $S$-morphism $x':Y\times_S T \to X$. Now, we can base change from $S$ to $S'$ and get an $S'$-morphism $x'': (Y\times_S T)\times_S S' \to X\times S'$. Notice, that as
    $$ (Y\times_S T)\times_S S'\cong  (Y\times_S S')\times_{S'} T=Y'\times_{S'} T$$
    we get an $S'$-morphism $ Y'\times_{S'} T \to X'$, hence an element in $\underline{X'{}^{\wedge}_{S'} Y'}(T)$.

    On the other hand, let $y\in\underline{X'{}^{\wedge}_{S'} Y'}(T)$, i.e. an $S'$-morphism $y:Y'\times_{S'}T\to X'$. Notice, that
    \[
        Y'\times_{S'} T=(Y\times_S S')\times_{S'} T\cong Y\times_S T,
    \]
    hence we got an $S$-morphism $Y\times_{S}T\to X\times_S S'$, and composing with the projection $X\times_S S'\to X$, and using representability, we get an $S$-morphism $y':T \to X{}^{\wedge}_S Y$. Now, $y'$ together with the $S'$-structure morphism of $T$, induce an morphism $T\to (X{}^{\wedge}_S Y)\times_{S} S'$.

    We have defined a bijection between $\underline{X'{}^{\wedge}_{S'} Y'}(T)$ and $\underline{(X{}^{\wedge}_S Y)\times_{S} S'}(T)$ which is functorial in $T$, hence an equivalence of categories.
\end{proof}

\begin{lemma}\label{lem: etale extent restrict}
    Let $\Gamma$ be a finite group, $F$ a field, $E$ a finite {\'e}tale $F$-algebra endowed with a $\Gamma$-action  over $F$, and $Y$ an affine $F$-scheme. Then the functors $ \underline{(Y_{E/F})^\Gamma}$, $\underline{Y_{E^\Gamma/F}}$ are representable by the same affine scheme.
\end{lemma}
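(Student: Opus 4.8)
The plan is to prove the statement by comparing functors of points and invoking the Yoneda lemma, reducing everything to the two base-change compatibilities already established (Lemma \ref{lem: qoutient of etale} for the quotient $E\rightsquigarrow E^\Gamma$, i.e. $\Spec(E^\Gamma) = \Spec(E)/\Gamma$, and Lemma \ref{lem: internal hom base change} for the internal morphism space). First I would fix an affine $F$-scheme $T$ and spell out both functors on $T$. On one side, an element of $\underline{Y_{E^\Gamma/F}}(T) = \underline{Y{}^{\wedge}_{\Spec F}\Spec(E^\Gamma)}(T)$ is an $F$-morphism $\Spec(E^\Gamma)\times_{\Spec F} T \to Y$. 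On the other side, $\underline{(Y_{E/F})^\Gamma}(T)$ is the set of $\Gamma$-fixed points in $\underline{Y_{E/F}}(T) = \Mor_F(\Spec(E)\times_{\Spec F}T,\, Y)$, where the $\Gamma$-action on the latter comes (via Remark \ref{rem: functor to action on scheme} / Definition \ref{def: group action on a scheme}) from the $\Gamma$-action on $\Spec(E)$ over $F$, acting by precomposition. So a point on this side is an $F$-morphism $f:\Spec(E)\times_{\Spec F}T \to Y$ with $f\circ(\gamma\times \mathrm{id}_T) = f$ for all $\gamma\in\Gamma$.

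Next I would observe that $\Gamma$ acts on $\Spec(E)\times_{\Spec F} T$ through the first factor, and that since base change commutes with the formation of the quotient of a finite étale scheme by a finite group (the base-change clause of Lemma \ref{lem: qoutient of etale}), we have a canonical identification
\[
  \bigl(\Spec(E)\times_{\Spec F} T\bigr)\big/\Gamma \;\cong\; \bigl(\Spec(E)/\Gamma\bigr)\times_{\Spec F} T \;=\; \Spec(E^\Gamma)\times_{\Spec F} T,
\]
using also the identification $\Spec(E)/\Gamma\cong\Spec(E^\Gamma)$ from Lemma \ref{lem: qoutient of etale}. Then a $\Gamma$-invariant $F$-morphism $f$ out of $\Spec(E)\times_{\Spec F}T$ is, by the universal property of the quotient (which holds because this quotient is a genuine categorical quotient in the finite étale setting, cf.\ \cite[Lemma 58.5.2]{SP}), the same data as an $F$-morphism out of $\Spec(E^\Gamma)\times_{\Spec F}T$ into $Y$. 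This gives a bijection $\underline{(Y_{E/F})^\Gamma}(T)\cong \underline{Y_{E^\Gamma/F}}(T)$, and chasing the construction shows it is natural in $T$. By Yoneda (applied to affine test schemes, which suffices since both sides are representable by Lemma \ref{lem: fixed points scheme} and Lemma \ref{lem: internal hom repr}, $Y$ being affine and $\Spec(E),\Spec(E^\Gamma)$ finite étale over $F$), the two representing affine schemes are isomorphic.

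The main obstacle, and the step deserving the most care, is the identification of the $\Gamma$-action on $\underline{Y_{E/F}}$ with the one induced by the action on $\Spec(E)$, and then correctly matching "fixed points of precomposition by $\Gamma$" with "morphisms that factor through the quotient." One must check that the $\Gamma$-action appearing in $\underline{(Y_{E/F})^\Gamma}$ is precisely the functorial action coming from $\rho:\Gamma\to\Aut_F(\Spec E)$ via Remark \ref{rem: functor to action on scheme} — i.e.\ that it is precomposition, not some other action — and that the universal property of $\Spec(E)/\Gamma$ is strong enough to turn $\Gamma$-invariance of a morphism into a factorization through the quotient, uniformly after the base change to $T$. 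This is exactly where the finite-étale hypothesis is used: for general quotients the invariant-morphisms-versus-quotient comparison can fail, but for finite étale $\Spec(E)/\Gamma$ it is a genuine categorical (indeed fppf, even étale-local) quotient, so the factorization is valid, and it is stable under the base change $-\times_{\Spec F} T$ by the base-change clauses already cited. Once this is pinned down, the remainder is a routine naturality check.
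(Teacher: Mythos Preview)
Your proposal is correct and follows essentially the same route as the paper's proof: both arguments unravel the functor of points, identify a $\Gamma$-fixed element of $\underline{Y_{E/F}}(T)$ with a $\Gamma$-invariant morphism $\Spec(E)\times_{\Spec F}T\to Y$, and then use Lemma~\ref{lem: qoutient of etale} (including its base-change compatibility) to factor such a morphism through $\Spec(E^\Gamma)\times_{\Spec F}T$. Your write-up is in fact a bit more explicit than the paper's about the universal property of the finite \'etale quotient and about why the $\Gamma$-action is precomposition, but the underlying argument is the same.
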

\begin{proof}
    First, notice that by Lemma \ref{lem: internal hom repr}, $\underline{Y_{E^\Gamma/F}}$ and $\underline{Y_{E/F}}$ are representable by affine schemes. By Lemma \ref{lem: fixed points scheme}, $\underline{(Y_{E/F})^\Gamma}$ is also representable by an affine scheme. Therefore, we are left to show that  $ \underline{(Y_{E/F})^\Gamma}$ and $\underline{Y_{E^\Gamma/F}}$ are equivalent.

    Let $T$ be an $F$-scheme. Notice, that an element in $ \underline{(Y_{E/F})^\Gamma}(T)$ is an $F$-morphism $x:T\to Y_{E/F}$, fixed by composition with the $\Gamma$-action on $Y_{E/F}$, where $Y_{E/F}$ is the affine scheme representing $\underline{Y_{E/F}}$. By representability, $x$ is the same as a morphism $x':T\times_{\Spec(F)}\Spec(E)\to Y$, fixed by pre-composition with the $\Gamma$-action induced on $T\times_{\Spec(F)}\Spec(E)$. Therefore, $x'$ can be considered as a map $T\times_{\Spec(F)}\Spec(E)/\Gamma\to Y$, as can be checked by passing to the functor of points. By Lemma \ref{lem: qoutient of etale}, $\Spec(E)/\Gamma\cong\Spec(E^\Gamma)$. In conclusion, we get a morphism $x'':T\times_{\Spec(F)} \Spec(E^\Gamma)\to X$, which corresponds to a unique element in $\underline{Y_{E^\Gamma/F}}(T)$.

    This construction is reversible, hence forms a bijection between $ \underline{(Y_{E/F})^\Gamma}(T)$ and $\underline{Y_{E^\Gamma/F}}(T)$ which is functorial in $T$. Therefore, we have an equivalence of functors between  $ \underline{(Y_{E/F})^\Gamma}$ and $\underline{Y_{E^\Gamma/F}}$.
\end{proof}

\subsection{Forms of reductive groups}

Throughout this subsection, we fix a field $F$ and a connected reductive $F$-group $\mathbf{G}$. We review the Galois cohomology approach to classification of $F$-forms of $\mathbf{G}$ (see \cite[Section \S 1]{Con14} for a full exposition). Additionally, we prove a few lemmas that follow from this approach, formulated in a way that is convenient for our purposes.

\begin{definition}
    Let $\mathbf{H}$ be another reductive $F$-group. We say that $\mathbf{H}$ is an $\mathbf{F}$\textbf{-form} of $\mathbf{G}$ if for some finite Galois extension $E/F$, we get an isomorphism $\mathbf{H}_E\cong\mathbf{G}_E$.
\end{definition}

Before proceeding with our review of forms, we define the rank of a reductive group scheme, which is an invariant of forms. This definition is used in the main result (Theorem \ref{thm: versal family reductive}) and in several other constructions.

\begin{definition}
    The \textbf{rank of a root datum} $(X,\Phi,X^\vee,\Phi^\vee)$ (see e.g. \cite[Definition 1.3.3]{Con14}) is the rank of $X$ as a free $\bZ$-module. The \textbf{absolute root datum} of $\mathbf{G}$ is the root datum associated to $\mathbf{G}_{\overline{F}}$ (see e.g.  \cite[Theorem 1.3.15]{Con14}), where $\overline{F}$ is the algebraic closer of $F$.  The \textbf{rank of a reductive group} $\mathbf{G}$ is the rank of its absolute root datum.
\end{definition}

\begin{remark}
    The absolute root datum of $\mathbf{G}_E$ is constant for any finite Galois extension $E/F$. In particular, all of $F$-forms of $\mathbf{G}$ has the same absolute root data, hence the same rank.
\end{remark}

We continue with our discussion of classification of $F$-forms by Galois cohomology, via the definition of  1-cocycles.

\begin{definition}
    Let $\Gamma$ be a group acting on $\mathbf{G}$ over $F$. Define a $\Gamma$-action on $\Aut_F(\mathbf{G})$ by $\prescript{\gamma}{}{\phi}=\gamma\circ\phi\circ \gamma^{-1}$, for any $\phi\in\Aut_F(\mathbf{G})$ and $\gamma\in\Gamma$. A \textbf{1-cocycle} is a map $c:\Gamma\to\Aut_F(\mathbf{G})$ satisfying \[
        c(\gamma_1\gamma_2)=c(\gamma_1)\cdot \prescript{\gamma_1}{}{c(\gamma_2)}
    \]
    for any $\gamma_1,\gamma_2\in\Gamma$.
\end{definition}

We denote the set of all 1-cocycle by $Z^1(\Gamma,\Aut_F(\mathbf{G}))$, which can be consider as a functor from $\mathbf{Sch}_F$ to  $\mathbf{Set}$. We later show, that under certain assumptions, this functor is representable (see Lemma \ref{lem: 1-cocycles scheme}). 1-cocycles naturally appears in the following case.

\begin{definition}\label{def: induced cocycle}
    Let $\Gamma$ be a group, $E$ an $F$-algebra endowed with a $\Gamma$-action on it, and $\mathbf{G}'$ another reductive $F$-group. Notice, that there is a natural action of $\Gamma$ on $\mathbf{G}_E$ and $\mathbf{G}'_E$. For any isomorphism $a:\mathbf{G}_E\xrightarrow{\sim}\mathbf{G}'_E$, define a 1-cocycle $c_a\in Z^1(\Gamma,\Aut_F(\mathbf{G}_E))$ by
    \[
        c_a(\gamma)=a^{-1}\circ\gamma\circ a\circ \gamma^{-1},
    \] for any $\gamma\in\Gamma$.  We call $c_a$ the \textbf{induced 1-cocycle} from $a$.
\end{definition}

Notice, that if $a$ above was $\Gamma$-equivariant then, $c_a(\gamma)=id_{\mathbf{G}_E}$. Therefore, we informally say that $c_a$ measures the obstruction of $a$ from being $\Gamma$-equivariant. An important case of Definition \ref{def: induced cocycle}, is when $\mathbf{G}'$ is an $F$-form of $\mathbf{G}$, $E/F$ is a Galois extension such that $\mathbf{G}_E\cong \mathbf{G}'_E$ and  $\Gamma=\Gal(E/F)$. Therefore, we found a map between $F$-forms and 1-cocycles. We now show how to use this map to get a full classification of $F$-forms of $\mathbf{G}$.

\begin{definition}
    Let $\Gamma$ be a group acting on $\mathbf{G}$ over $F$. Define an equivalence relation on $Z^1(\Gamma,\Aut_F(\mathbf{G}))$ as follows: two 1-cocycles  $c_1,c_2\in Z^1(\Gamma,\Aut_F(\mathbf{G}))$ are equivalent if there is some $\phi\in\Aut_F(\mathbf{G})$ such that $c_1(\gamma)=\phi^{-1}\circ c_2(\gamma)\circ \prescript{\gamma}{}{\phi}$  for all $\gamma\in \Gamma$.

    We say that such $c_1,c_2$ are \textbf{cohomologous} or that they satisfy the \textbf{1-coboundary} condition. The set of all equivalence classes is called the \textbf{1-cohomology classes}  and denoted by $H^1(\Gamma,\Aut_F(\mathbf{G}))$.
\end{definition}

As mentioned above, these definition are used in the cohomological approach to classifying $F$-forms of $\mathbf{G}$. This approach is formulated in the following lemma, which is a rephrasing of \cite[Lemma 7.1.1]{Con14}, stated for finite Galois extension instead of the absolute Galois extension. The proof in \cite{Con14} prove this formulation as well.

\begin{lemma}[cf. {\cite[Lemma 7.1.1]{Con14}}]\label{lem: cohomo cocycles iff iso forms}
    Let $\mathbf{G}'$ and $\mathbf{G}''$ be reductive $F$-groups, and $E/F$ be a finite Galois extension such that: \[
        \mathbf{G}_E\cong\mathbf{G}'_E\text{ and } \mathbf{G}_E\cong\mathbf{G}''_E
    \]
    The induced 1-cocycles, $c',c''\in Z^1(\Gal(E/F),\Aut_F(\mathbf{G}_E))$, are cohomologous if and only if $\mathbf{G}'\cong\mathbf{G}''$ over $F$.

    In other words, $H^1(\Gal(E/F),\Aut_F(\mathbf{G}_E))$ classify all $F$-forms of $\mathbf{G}$ that becomes isomorphic to $\mathbf{G}_E$ after base change to $E$.
\end{lemma}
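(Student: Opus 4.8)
The plan is to reduce the statement to the already-cited result \cite[Lemma 7.1.1]{Con14}, which is phrased for the absolute Galois group $\Gamma_F=\Gal(\overline{F}/F)$, by showing that passing from the finite Galois extension $E/F$ to $\overline{F}/F$ does not change the relevant cohomology set. The key point is the inflation map: a finite Galois extension $E/F$ inside $\overline{F}$ gives a surjection $\Gamma_F\twoheadrightarrow\Gal(E/F)$, and since $\mathbf{G}$ becomes split (hence all its $F$-forms under consideration become isomorphic to $\mathbf{G}_E$) already over $E$, the standard inflation-restriction argument shows that the inflation map
\[
\operatorname{inf}\colon H^1(\Gal(E/F),\Aut(\mathbf{G}_E))\longrightarrow H^1(\Gamma_F,\Aut(\mathbf{G}_{\overline{F}}))
\]
is a bijection onto the subset of classes that are trivial upon restriction to $\Gamma_E$, i.e. onto exactly the classes of $F$-forms that split over $E$. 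I would first record this, citing the nonabelian inflation-restriction exact sequence (see e.g. \cite[Ch. I, \S5.8(a)]{Serre_GaloisCohomology} or the corresponding statement in \cite{Con14}); one checks that $\Aut(\mathbf{G}_E)(\overline{F}\otimes_E-)$ is unchanged, so the coefficient modules match up under inflation.

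Next I would make the translation between 1-cocycles explicit. Given an $F$-form $\mathbf{G}'$ with $\mathbf{G}'_E\cong\mathbf{G}_E$ via some isomorphism $a$, Definition \ref{def: induced cocycle} produces $c_a\in Z^1(\Gal(E/F),\Aut_F(\mathbf{G}_E))$; composing $a$ with the base change $\mathbf{G}_E\to\mathbf{G}_{\overline{F}}$ gives an isomorphism $\mathbf{G}'_{\overline{F}}\cong\mathbf{G}_{\overline{F}}$ whose induced cocycle on $\Gamma_F$ is precisely the inflation of $c_a$ along $\Gamma_F\twoheadrightarrow\Gal(E/F)$ — this is a direct computation from the cocycle formula $c(\gamma)=a^{-1}\circ\gamma\circ a\circ\gamma^{-1}$, using that the $\Gamma_F$-action on $\Aut(\mathbf{G}_{\overline{F}})$ restricts through the quotient to the $\Gal(E/F)$-action on $\Aut(\mathbf{G}_E)$. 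Likewise, two cocycles on $\Gal(E/F)$ are cohomologous if and only if their inflations to $\Gamma_F$ are cohomologous: one direction is formal (inflate the conjugating element), and the converse uses injectivity of inflation established in the previous paragraph.

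With these two ingredients in hand, the lemma follows by a diagram chase. Given $\mathbf{G}',\mathbf{G}''$ with the stated properties and induced cocycles $c',c''$ on $\Gal(E/F)$, \cite[Lemma 7.1.1]{Con14} applied over $\overline{F}$ says that $\operatorname{inf}(c')$ and $\operatorname{inf}(c'')$ are cohomologous in $H^1(\Gamma_F,\Aut(\mathbf{G}_{\overline{F}}))$ if and only if $\mathbf{G}'\cong\mathbf{G}''$ over $F$; by the injectivity of inflation this happens if and only if $c'$ and $c''$ are already cohomologous over $\Gal(E/F)$. The final sentence of the lemma — that $H^1(\Gal(E/F),\Aut(\mathbf{G}_E))$ classifies all $F$-forms split by $E$ — then follows because every such form arises as $\mathbf{G}'$ for some choice of splitting isomorphism $a$ (surjectivity onto forms), and the cohomology class is independent of that choice up to the coboundary relation.

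The main obstacle I anticipate is the bookkeeping around non-abelian inflation-restriction: $\Aut_F(\mathbf{G}_E)$ is a non-abelian group and $H^1$ is only a pointed set, so one must be careful that the relevant exact sequence of pointed sets genuinely gives \emph{injectivity} of inflation (not merely that the image is the kernel of restriction as a set-theoretic fibre over the basepoint). This is exactly where one uses that $\mathbf{G}_E$ is already split so that $\Gamma_E$ acts trivially on $\Aut(\mathbf{G}_{\overline{F}})(\overline{F})=\Aut(\mathbf{G}_E)(\overline{F})$ in the relevant sense, making the restriction-to-$\Gamma_E$ map well-understood; an alternative is to bypass the general machinery and argue directly with $\overline{F}$-points, reproving the relevant part of \cite[Lemma 7.1.1]{Con14} with $E$ in place of $\overline{F}$, which is what the remark preceding the statement ("The proof in \cite{Con14} prove this formulation as well") suggests is in fact the cleanest route.
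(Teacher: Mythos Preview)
The paper does not actually give a proof of this lemma: the sentence immediately preceding the statement says that Conrad's argument for the absolute Galois extension works verbatim for a finite Galois extension $E/F$, and the lemma is then stated without a proof environment. So the paper's ``proof'' is simply: rerun \cite[Lemma 7.1.1]{Con14} with $E$ in place of $\overline{F}$. You arrive at the same realisation in your final paragraph, but your main proposal goes a different and longer way --- deducing the finite case from the absolute one via inflation. That route is legitimate, and what it buys you is that you can cite \cite[Lemma 7.1.1]{Con14} as a black box rather than re-examining its proof; the cost is the extra non-abelian inflation--restriction bookkeeping you flag.

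There is one genuine misstep in your argument, though it does not ultimately derail it. You repeatedly assert that $\mathbf{G}_E$ is split and that this is what makes inflation injective. But splitness of $\mathbf{G}_E$ is \emph{not} among the hypotheses of the lemma --- only $\mathbf{G}'_E\cong\mathbf{G}_E\cong\mathbf{G}''_E$ is assumed --- and even if it were, $\Gamma_E$ would still not act trivially on $\Aut(\mathbf{G}_{\overline{F}})$. What you actually need is the standard fact (Serre, \emph{Galois Cohomology}, Ch.\ I, \S5.8) that for any normal subgroup $N\trianglelefteq G$ and any non-abelian $G$-group $A$, the inflation map $H^1(G/N,A^N)\to H^1(G,A)$ is injective: if $\mathrm{inf}(c_1)$ and $\mathrm{inf}(c_2)$ differ by $a\in A$, then evaluating on $N$ forces $a\in A^N$, whence $c_1$ and $c_2$ already differ by $a$ in $Z^1(G/N,A^N)$. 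The identification of coefficient groups $\Aut(\mathbf{G}_{\overline{F}})^{\Gamma_E}=\Aut_E(\mathbf{G}_E)$ is Galois descent for scheme automorphisms, not a consequence of splitness. Once you drop the spurious splitness claim, your reduction goes through cleanly.
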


Finally, we show how to generalize the classification presented in Lemma \ref{lem: cohomo cocycles iff iso forms} from Galois extension, to any {\'e}tale $F$-algebra $E$, endowed with a group action of some finite group $\Gamma$, such that $E^\Gamma=F$. To do so, we first show how to get a group action from an $1$-cocycle.

\begin{definition}\label{def: cocycle to action}
    Let $\Gamma$ be a group acting on $\mathbf{G}$ over $F$. For any $c\in Z^1(\Gamma,\Aut_F(\mathbf{G}))$ we define a $\Gamma$-action $*_c:\Gamma\to\Aut_F(\mathbf{G})$ as $*_c(\gamma)=c(\gamma)\circ\gamma$ for any $\gamma \in \Gamma$.
\end{definition}

Notice, that any two 1-cocycles $c_1,c_2\in Z^1(\Gamma,\Aut_F(\mathbf{G}))$ are cohomologous if and only if there is some element $\phi\in\Aut_F(\mathbf{G})$, which is $\Gamma$-equivariant with respect to the $\Gamma$-action $*_{c_1}$ on the source and $*_{c_2}$ on the target.

\begin{lemma}\label{lem: etale extension reduction}
    Let $\Gamma$ be a finite group, $E$ a finite {\'e}tale $F$-algebra endowed with a $\Gamma$-action on $E$, such that $E^\Gamma=F$ and $c\in Z^1(\Gamma, \Aut_F(\mathbf{G}_{E/F}))$ a 1-cocycle. Then the functor $\underline{\mathbf{G}_{E/F}^{*_c(\Gamma)}}$ (see Definition \ref{def: fixed points/quotient functor/space}) is representable by an $F$-form of $\mathbf{G}$, denoted by $\mathbf{G}_{E/F}^{*_c(\Gamma)}$. In other words, for some finite Galois extension  $E'/F$,
    \begin{equation}\label{eq: etale extension reduction}
        \mathbf{G}_{E'}\cong (\mathbf{G}_{E/F}^{*_c(\Gamma)})_{E'}.
    \end{equation}

    Moreover, if $E/F$ is a finite Galois extension, then we may take $E'=E$ and choose the isomorphism in Equation \eqref{eq: etale extension reduction} to induce $c$.
\end{lemma}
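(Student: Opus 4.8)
The plan is to reduce the étale-algebra statement to the Galois case, which is already covered by Lemma \ref{lem: cohomo cocycles iff iso forms} and the general theory. First I would recall that, since $E$ is a finite étale $F$-algebra with a $\Gamma$-action and $E^\Gamma=F$, after base change to a suitable finite Galois extension $E'/F$ the algebra $E$ becomes a product of copies of $E'$ indexed by $\Gamma$ with $\Gamma$ acting by translation; more precisely, I would choose $E'$ to be a finite Galois extension of $F$ that splits $E$ (i.e. $E\otimes_F E'\cong (E')^{\Gamma}$ as $E'$-algebras with $\Gamma$-action by permutation of the factors). This is the standard description of the étale algebras that arise as ``forms of the split $\Gamma$-torsor'' $F^{\Gamma}$, and the condition $E^\Gamma=F$ is exactly what guarantees $\Gamma$ acts simply transitively on the factors after splitting.

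Next I would analyze the functor $\underline{\mathbf{G}_{E/F}^{*_c(\Gamma)}}$ after base change to $E'$. By Lemma \ref{lem: fixed points scheme} the formation of $X^\Gamma$ commutes with base change, so $(\mathbf{G}_{E/F}^{*_c(\Gamma)})_{E'}\cong ((\mathbf{G}_{E/F})_{E'})^{*_c(\Gamma)}$, where now I must track how the twisted action $*_c$ interacts with the base change. Using the splitting $E\otimes_F E'\cong (E')^\Gamma$, the scheme $(\mathbf{G}_{E/F})_{E'} = \mathbf{G}_{E\otimes_F E'/E'}$ becomes $\prod_{\gamma\in\Gamma}\mathbf{G}_{E'}$, and $\Gamma$ (via $*_c$, which differs from the ``geometric'' permutation action by the inner twist encoded by $c$) acts on this product; taking $\Gamma$-fixed points of a simply-transitive permutation-type action on $\prod_\Gamma \mathbf{G}_{E'}$ recovers a single copy of $\mathbf{G}_{E'}$, possibly twisted by an automorphism. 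The key computation is that the projection to, say, the factor indexed by $1\in\Gamma$ induces an isomorphism from the fixed-point functor to $\mathbf{G}_{E'}$ (checked on the functor of points, as the paper does everywhere). This simultaneously shows representability of $\underline{\mathbf{G}_{E/F}^{*_c(\Gamma)}}$ by an $F$-scheme (the functor is representable after a faithfully flat base change by $\mathbf{G}_{E'}$, and descent plus the fact that $\mathbf{G}_{E/F}$ is affine of finite type gives representability over $F$ — alternatively invoke Lemma \ref{lem: etale extent restrict} in the relevant special case) and that the resulting $F$-scheme is a reductive group scheme whose base change to $E'$ is $\mathbf{G}_{E'}$, i.e. an $F$-form of $\mathbf{G}$.

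For the ``moreover'' clause, when $E/F$ is itself a finite Galois extension with Galois group $\Gamma$, I would take $E'=E$ directly; there is no need to split further. In that case $\underline{\mathbf{G}_{E/F}^{*_c(\Gamma)}}$ is, by definition of the internal morphism space and of $*_c$, exactly the functor classically used to define the twisted form: its $F$-points are the $E$-points of $\mathbf{G}$ fixed under $\gamma\mapsto c(\gamma)\circ\gamma$, which is the $*_c$-twisted descent datum. Classical Galois descent for affine $F$-schemes of finite type (equivalently, the cohomological description underlying Lemma \ref{lem: cohomo cocycles iff iso forms}) shows this functor is represented by an $F$-form $\mathbf{G}'$ with $\mathbf{G}'_E\cong\mathbf{G}_E$, and by construction the descent isomorphism $\mathbf{G}'_E\cong\mathbf{G}_E$ has induced $1$-cocycle (in the sense of Definition \ref{def: induced cocycle}) equal to $c$. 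I would verify the last point by unwinding the definitions: the descent datum on $\mathbf{G}_E$ is $\gamma\mapsto *_c(\gamma)=c(\gamma)\circ\gamma$, and Definition \ref{def: induced cocycle} applied to the canonical isomorphism returns precisely $c$.

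The main obstacle I anticipate is the bookkeeping in the base-change step: carefully identifying the $*_c$-action on $\prod_{\gamma\in\Gamma}\mathbf{G}_{E'}$ after the splitting $E\otimes_F E'\cong (E')^\Gamma$, and checking that the projection to one factor is $*_c(\Gamma)$-equivariantly an isomorphism onto the fixed points, requires writing the cocycle identity for $c$ in coordinates and is the one place where a genuine (if routine) computation on functors of points is unavoidable. Everything else — representability, affineness, preservation of reductivity — follows formally from Lemmas \ref{lem: fixed points scheme}, \ref{lem: internal hom repr}, \ref{lem: etale extent restrict} and standard descent.
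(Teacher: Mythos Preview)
Your approach is essentially the paper's: base change to a finite Galois extension $E'/F$ that splits $E$, identify $(\mathbf{G}_{E/F})_{E'}$ with a product of copies of $\mathbf{G}_{E'}$ via Lemma~\ref{lem: internal hom base change}, observe that the (twisted) $\Gamma$-action permutes the factors transitively, and conclude that the fixed-point scheme is a single copy of $\mathbf{G}_{E'}$; the Galois case falls out by taking $E'=E$.

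One inaccuracy to fix: the hypothesis $E^\Gamma=F$ only forces $\Gamma$ to act \emph{transitively} on the factors of $E\otimes_F E'\cong (E')^m$, not simply transitively, and the number of factors is $m=\dim_F E$, which is not assumed to equal $|\Gamma|$. (Your claim that $E\otimes_F E'\cong (E')^{\Gamma}$ with $\Gamma$ acting by translation is therefore too strong in general.) The paper works with mere transitivity; the point is that the $\Gamma$-action on $E\otimes_F E'$ is by $E'$-algebra automorphisms, so the stabilizer of a factor acts trivially on that factor, and this is what makes the projection-to-one-factor identification of the fixed points go through. Also, your suggested fallback to Lemma~\ref{lem: etale extent restrict} does not apply directly here, since that lemma treats the untwisted $\Gamma$-action rather than $*_c$.
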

\begin{proof}
    The main claim we need to prove is the isomorphism in Equation \ref{eq: etale extension reduction}. This is because representability follows from Lemma \ref{lem: fixed points scheme}, and the Galois extension case will be immediate from our proof of the general case.

    By the characterization of finite {\'e}tale $F$-algebras $E\cong \prod_{i=1}^n E_i$, where each $E_i$ is a separable extension of $F$. Set $E'$ to be the minimal Galois extension of $F$ such that $E_i$ embeds in $ E'$ for all $i\leq n$. Now, let $m\coloneqq\sum_{i=1}^n [E_i/F]$ and notice that:
    \begin{align*}
        \Spec(E)\times_{\Spec(F)}\Spec(E')
         & \cong\Spec(E\otimes_{F}E')                   \\
         & \cong\Spec((\prod_{i=1}^n E_i)\otimes_{F}E') \\
         & \cong\Spec(\prod_{i=1}^n (E_i\otimes_F E'))  \\
         & \cong\Spec(\prod_{i=1}^m (E'))               \\
         & \cong\bigsqcup_{i=1}^m\Spec(E').
    \end{align*}
    Moreover, $\Spec(E)\times_{\Spec(F)}\Spec(E')$ inherits a $\Gamma$-action over $E'$, which must act by permutations on the copies of $\Spec(E')$. In addition, notice that $(E\otimes_{F}E')^\Gamma=E'$, hence $\Gamma$ acts transitively of the set on copies of $\Spec(E')$.

    Now, by Lemma \ref{lem: internal hom base change} we get
    $$(\mathbf{G}_{E/F})_{E'}\cong (\mathbf{G}_{E'})_{E\otimes E'/E'}\cong (\mathbf{G}_{E'})_{\prod_{i=1}^m (E')/E'}\cong \prod_{i=1}^m\mathbf{G}_{E'},$$
    and the $\Gamma$-action transitively permutes the copies of $\mathbf{G}_{E'}$, as can be directly checked on the level of the associated functor of points.

    Now, let $c_{E'}\in Z^1(\Gamma,\Aut_{E'}((\mathbf{G}_{E/F})_{E'}))$ be the induced 1-cocycle from $c$ via base change by $\Spec(E')$. We get that the $*_{c_{E'}}$-action is also transitive on the $m$-copies, by considering $c_{E'}^{-1}$. Therefore, by Lemma \ref{lem: limit base change} $$(\mathbf{G}_{E/F}^{*_c(\Gamma)})_{E'}\cong((\mathbf{G}_{E/F})_{E'})^{*_{c_{E'}}(\Gamma)}\cong\mathbf{G}_{E'}.$$
\end{proof}
We may add a few assumptions to Lemma \ref{lem: etale extension reduction}, and verify that the $F$-form obtained from it is quasi-split (see the definition in \cite[15.4]{Milne_reductive_groups}).

\begin{lemma}\label{lem: etale extension reduction - quasi-split}
    In the settings of Lemma \ref{lem: etale extension reduction}, assume that $\mathbf{G}$ is split an choose a pinning $(\mathbf{G}, \mathbf{T}, \mathbf{B}, \{X_a\}_{a\in\Delta})$ for it (see \cite[Definition 1.5.4]{Con14}). Moreover, assume the image of the 1-cocycle $c$ lies in $\Aut(\mathbf{G}, \mathbf{T}, \mathbf{B}, \{X_a\}_{a\in\Delta})$, i.e., preserve the pinning. Then the reductive group $\mathbf{G}_{E/F}^{*_c(\Gamma)}$ in Lemma \ref{lem: etale extension reduction} is quasi-split. \end{lemma}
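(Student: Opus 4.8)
The plan is to reduce the quasi-split claim over $F$ to the quasi-split claim over the splitting extension $E'$ constructed in the proof of Lemma~\ref{lem: etale extension reduction}, and there to recognize the fixed-point group as the standard quasi-split form attached to the Galois descent datum determined by the pinning-preserving cocycle $c$. First I would recall that quasi-split is a geometric notion that can be detected after faithfully flat base change: a reductive $F$-group $\mathbf{H}$ is quasi-split if and only if it admits a Borel subgroup defined over $F$, and by descent it suffices to produce a Borel subgroup of $\mathbf{H}$ over $F$ directly. So the strategy is to descend the standard Borel $\mathbf{B}$ of the pinning through the fixed-point construction, using that $c$ preserves $(\mathbf{G},\mathbf{T},\mathbf{B},\{X_a\})$.

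Concretely, I would argue as follows. Set $\mathbf{G}^{qs}\coloneqq\mathbf{G}_{E/F}^{*_c(\Gamma)}$, which is an $F$-form of $\mathbf{G}$ by Lemma~\ref{lem: etale extension reduction}. The $\Gamma$-action $*_c$ on $\mathbf{G}_{E/F}$ is, by hypothesis, through automorphisms preserving the pinning; in particular it preserves the Borel subgroup $\mathbf{B}_{E/F}\subseteq\mathbf{G}_{E/F}$ and the maximal torus $\mathbf{T}_{E/F}$ (these are the internal-morphism-space versions of $\mathbf{B}$ and $\mathbf{T}$, to which $\Aut(\mathbf{G},\mathbf{T},\mathbf{B},\{X_a\})$ acts trivially on the level of the pinning data). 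Hence I can form $\mathbf{B}_{E/F}^{*_c(\Gamma)}$ and $\mathbf{T}_{E/F}^{*_c(\Gamma)}$ by Lemma~\ref{lem: fixed points scheme}, and these are closed $F$-subgroup schemes of $\mathbf{G}^{qs}$. It remains to check that, after base change to the Galois extension $E'$ from the proof of Lemma~\ref{lem: etale extension reduction}, $\mathbf{B}_{E/F}^{*_c(\Gamma)}$ becomes a Borel subgroup of $\mathbf{G}^{qs}_{E'}$. Using Lemma~\ref{lem: limit base change} (fixed points commute with base change) together with the computation $(\mathbf{G}_{E/F})_{E'}\cong\prod_{i=1}^m\mathbf{G}_{E'}$ with $\Gamma$ permuting the factors transitively and $*_{c_{E'}}$ acting compatibly, the fixed points of the diagonal-up-to-twist $\Gamma$-action on $\prod_{i=1}^m\mathbf{B}_{E'}$ is a single copy of $\mathbf{B}_{E'}$, sitting inside the single copy of $\mathbf{G}_{E'}$ that is $(\mathbf{G}^{qs})_{E'}$; this is exactly a Borel subgroup, since $\mathbf{B}$ is a Borel of the split group $\mathbf{G}$. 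Being a Borel subgroup is a geometric condition (smooth, connected, solvable, with reductive-group quotient proper — all of which are fpqc-local on the base, see \cite{Con14}), so $\mathbf{B}_{E/F}^{*_c(\Gamma)}$ is a Borel subgroup of $\mathbf{G}^{qs}$ over $F$ itself. Therefore $\mathbf{G}^{qs}$ is quasi-split.

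The main obstacle I anticipate is purely bookkeeping: making precise that the internal-morphism-space incarnations of $\mathbf{T}$ and $\mathbf{B}$ are genuinely preserved by $*_c$ and that the fixed-point subscheme $\mathbf{B}_{E/F}^{*_c(\Gamma)}$ is a \emph{subgroup} scheme of $\mathbf{G}^{qs}$ with the expected behaviour under base change — i.e. that the two formations "take fixed points" and "base change to $E'$" commute in the strong sense needed to identify the result with one copy of $\mathbf{B}_{E'}$. Both of these follow from Lemma~\ref{lem: limit base change} and Lemma~\ref{lem: fixed points scheme} and the explicit transitive-permutation description already extracted in the proof of Lemma~\ref{lem: etale extension reduction}, so no new idea is required; I would simply mirror that proof, carrying the subgroup $\mathbf{B}$ along at each step. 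An alternative, slightly slicker route is to invoke directly the dictionary between $F$-forms with pinning-preserving descent data and quasi-split forms (essentially \cite[Section~7.1]{Con14} and \cite[15.4, 19.57]{Milne_reductive_groups}): the cocycle $c$ with image in $\Aut(\mathbf{G},\mathbf{T},\mathbf{B},\{X_a\})$ is precisely the datum of a quasi-split form, and Lemma~\ref{lem: etale extension reduction} identifies $\mathbf{G}_{E/F}^{*_c(\Gamma)}$ with the form it cuts out; but I prefer the self-contained Borel-descent argument above since it reuses only the lemmas already proved in the paper.
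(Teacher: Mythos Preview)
Your proposal is correct and follows essentially the same approach as the paper: both form the candidate Borel $\mathbf{B}_{E/F}^{*_c(\Gamma)}$ using that $*_c$ preserves $\mathbf{B}_{E/F}$, then verify it is a Borel subgroup after base change to the extension $E'$ from the proof of Lemma~\ref{lem: etale extension reduction}, invoking the transitive-permutation description there to identify the fixed points with a single copy of $\mathbf{B}_{E'}\subseteq\mathbf{G}_{E'}$. The paper's write-up is terser (it cites \cite[15.4]{Milne_reductive_groups} for the ``Borel can be checked after field extension'' step rather than spelling out fpqc-locality, and does not bother with $\mathbf{T}$), but the argument is the same.
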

The conventional proof in the case where $E/F$ is a finite Galois extension works in this settings as well. Let us be more precise.
\begin{proof}
    First, notice that $\mathbf{B}_{E/F}\subseteq\mathbf{G}_{E/F}$ is a Borel subgroup.  Recall that $\mathbf{G}_{E/F}^{*_c(\Gamma)}$ was obtained by taking the fixed points of $\mathbf{G}_{E/F}$ under the action $*_c$. By our assumption, $*_c$ preserve the Borel group $\mathbf{B}_{E/F}$. By \cite[15.4]{Milne_reductive_groups}, in order to show that $\mathbf{B}_{E/F}^{*_c(\Gamma)}\subseteq \mathbf{G}_{E/F}^{*_c(\Gamma)}$ is Borel, it is enough to show it after base change by some field extension. The field extension $E'$ as in the proof of Lemma \ref{lem: etale extension reduction} shows that as
    $$(\mathbf{B}_{E/F}^{*_c(\Gamma)})_{E'}\cong \mathbf{B}_{E'} \text{ and } (\mathbf{G}_{E/F}^{*_c(\Gamma)})_{E'}\cong \mathbf{G}_{E'},$$
    by the same isomorphism.
\end{proof}

\subsection{Reductive group schemes}
In this subsection, we review some of the definitions used for the classification of reductive group schemes, which are essential for our proof. These definitions are taken almost verbatim from \cite{Con14}, which give a full description of the classification method. Our primary goal in this subsection, is to clarify how the terminology used in the classification of reductive groups over fields, which we refer to as the classical theory,  generalizes to the relative setting, over a general non-empty scheme $S$.

Throughout this subsection, fix a non-empty scheme $S$. The first and most natural place to start, is to generalize the notion of reductive groups over a field to reductive group schemes, over any non-empty scheme.

\begin{definition}[see {\cite[Definition 3.1.1]{Con14}}]\label{def: reductive over S}
    A \textbf{reductive} $\mathbf{S}$\textbf{-group} is a smooth $S$-affine group scheme $G\to S$ such that the geometric fibers $G_{\overline{s}}$ are \textit{connected} reductive groups.
\end{definition}

In the classical theory, reductive groups are often permitted to be disconnected. In the beginning of \cite[section $\S$3]{Con14}, there is a full discussion of why it is convenient to assume connectedness in the relative case. We move on to generalize the notion of a torus.

\begin{definition}[see {\cite[Appendix B]{Con14}}]
    For a scheme $S$ and finitely generated $\bZ$-module $M$, we define $D_S(M)$ to be the $S$-group $\Spec(\cO_S[M])$, representing the functor $\Hom_{S-gp}(M_S, G_m)$ of characters of the constant $S$-group $M_S$.
\end{definition}

\begin{definition}[see {\cite[B.1.1]{Con14}}]
    A group scheme $G\to S$ is of \textbf{multiplicative type} if there is an fppf covering $\{S_i\}$ of $S$ such that $G_{S_i}\cong D_{S_i}(M_i)$ for a finitely generated abelian group $M_i$ for each $i$. Such a $G$ is \textbf{split} (or \textbf{diagonalizable}) over $S$ if $G \cong D_S(M)$ for some $M$.
\end{definition}

\begin{definition}[see {\cite[Definition 3.1.1]{Con14}} and {\cite[Definition 3.2.1]{Con14}}]
    An $\mathbf{S}$\textbf{-torus} is an $S$-group $T\to S$ of multiplicative type with smooth connected fibers.  A \textbf{maximal torus} in a smooth $S$-affine group scheme $G\to S$ is a torus $T\subseteq G$ such that for each geometric point $\overline{s}$ of $S$ the fiber $T_{\overline{s}}$ is not contained in any strictly larger torus in $G_{\overline{s}}$.
\end{definition}

Now, we generalize the notion of roots space.

\begin{definition}[see {\cite[Definition 4.1.1]{Con14}}]\label{def: root space}
    Assume that $G$ admits a split maximal torus $T$ over $S$ and fix an isomorphism $T\cong D_S(M)$ for a finite free $\bZ$-module $M$. The T-action on $\fg = \Lie(G)$ then corresponds to an $\cO_S$-linear $M$-grading $\bigoplus_{m\in M} \fg_m$ of the vector bundle $\fg$, where $t \in T$ acts on the subbundle $\g_m$ via multiplication by the unit $m(t)$.

    A \textbf{root} for $(G, T)$ is a nonzero element $a \in M$ such that $\fg_a$ is a line bundle. We call such $\fg_a$ a \textbf{root space} for $(G, T, M)$.
\end{definition}

\begin{definition}[see {\cite[Definition 5.1.1]{Con14}}]
    A reductive $S$-group $G$ is \textbf{split}, if there exists a split maximal torus $T$ equipped with an isomorphism $T \cong D_S(M)$ for a finite free $\bZ$-module $M$, such that:
    \begin{enumerate}
        \item the nontrivial weights, $a: T \to \mathbf{G}_m$ that occur on $\fg = \Lie(G)$ arise from elements of $M$ (so in particular, such a is a root for $(G, T)$ and is “constant sections” of $M_S$),
        \item each root space $\fg_a$ is free of rank 1 over $\cO_S$,
        \item   each coroot $a^{\vee}:\mathbf{G}_m \to T$ arises from an element of the dual lattice $M^{\vee}$ (i.e., $a^{\vee}$ as a global section of $M_S^{\vee}$ over $S$ is a constant section).
    \end{enumerate}
\end{definition}

Finally, we generalize the notions of parabolic subgroups, Borel subgroups and pinning.

\begin{definition}[see {\cite[Definition 5.2.1]{Con14}}]\label{def: parabolic}
    A \textbf{parabolic subgroup} of a reductive group scheme $G\to S$ is a smooth $S$-affine group scheme $P$, equipped with a monic homomorphism $P \to G$, $P_{\overline{s}}$ is parabolic in $G_{\overline{s}}$ (i.e., $G_{\overline{s}}/P_{\overline{s}}$ is proper) for all geometric points $\overline{s}$ of $S$.  By \cite[Proposition 5.2.3]{Con14}  $P \to G$ is a closed immersion.
\end{definition}

\begin{definition}[see {\cite[Definition 5.2.10]{Con14}}]\label{def: Borel, quasi-split}
    A \textbf{Borel subgroup} of a reductive group scheme $G\to S$ is a parabolic subgroup $B\subseteq G$ such that $B_{\overline{s}}$ is a Borel subgroup of $G_{\overline{s}}$ for all geometric points $\overline{s}$ of $S$. A reductive group $G\to S$ is \textbf{quasi-split} over $S$ if it admits a Borel subgroup scheme over $S$.
\end{definition}

\begin{definition}[see {\cite[Definition 6.1.1]{Con14}}] \label{def: pinning}
    Let $(G, T, M)$ be a split reductive group over a nonempty scheme $S$, and let $R(G, T, M) = (M, \Phi, M^\vee, \Phi^\vee)$ be its associated root datum. A \textbf{pinning} on $(G, T, M)$ is a pair $(\Phi^+, \{X_a\}_{a\in\Delta})$ consisting of a positive system of roots $\Phi^+\subseteq \Phi$ (or equivalently, a base $\Delta$ of $\Phi$) and trivializing sections $X_a \in \fg_a(S)$ for each simple positive root $a \in \Delta$.
    The 5-tuple $(G,T,M,\Phi^+, \{X_a\}_{a\in\Delta})$ is a \textbf{pinned split reductive} $\mathbf{S}$\textbf{-group}.
\end{definition}

\begin{remark}\label{rem: pinning grp scheme / grp}
    Notice, that Definition \ref{def: pinning} is a generalization of the definition of a pinning on a split reductive group over a field, as in \cite[Definition 1.5.4]{Con14}.
\end{remark}

\section{Proof of main results}\label{sec: proof of main results}

\subsection{Family of finite {\'e}tale algebras}

The first step in our proof is to construct a finite {\'e}tale morphism $\Psi_{\Gamma} : \cE_{\Gamma} \rightarrow  \cF_{\Gamma}$ for any finite group $\Gamma$. The main feature of this morphism is that the collection of its fibers contains any Galois extensions $\Spec(E)\to\Spec(F)$, with Galois group $\Gal(E/F)=\Gamma$.

\begin{lemma} \label{lem: fam of finite etale algebras}
    Let $\Gamma$ be a finite group. There exists a finite {\'e}tale morphism $\Psi_{\Gamma} : \cE_{\Gamma} \rightarrow  \cF_{\Gamma}$ of  finitely presented affine $\bZ$-schemes, with a $\Gamma$-action on $\cE_{\Gamma}$ over $\cF_{\Gamma}$, such that for any field $F$:
    \begin{enumerate}
        \item \label{lem: fam of finite etale algebras: 1}For any point $x\in \cF_\Gamma(F)$, the functor $\underline{(\cE_\Gamma)_x/\Gamma}$  (see definition \ref{def: fixed points/quotient functor/space}) is representable by $\Spec(F)$.
        \item \label{lem: fam of finite etale algebras: 2}For any finite Galois extension $E/F$ with Galois group $\Gamma = \Gal(E/F)$, there exists a point $\nu \in \cF_{\Gamma}(F)$ such that $(\cE_\Gamma)_\nu\cong\Spec(E)$ and the action of  $\Gamma$ on $\Spec(E)$ is the Galois action.
    \end{enumerate}
\end{lemma}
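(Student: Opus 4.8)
The plan is to construct $\cF_\Gamma$ as a parameter scheme for $\Gamma$-actions on the "generic" rank-$|\Gamma|$ étale algebra, and $\cE_\Gamma$ as the tautological family of algebras over it, then cut down to the locus where the action is "Galois-like" (free and transitive on geometric fibers). Concretely, I would start from the regular representation: let $A_0 = \bZ^\Gamma = \prod_{\gamma\in\Gamma}\bZ$, the split étale $\bZ$-algebra of rank $|\Gamma|$, with its natural $\Gamma$-action by translation. Over any base, a finite étale algebra of rank $n=|\Gamma|$ with a $\Gamma$-action amounts, locally, to a homomorphism $\Gamma\to \Aut$ of a rank-$n$ étale algebra; rather than try to represent all such data at once, I would take $\cE_\Gamma \coloneqq \Spec A_0 \times_{\Spec\bZ}\cF_\Gamma$ where $\cF_\Gamma$ is an affine $\bZ$-scheme parameterizing a twisted $\Gamma$-action on $A_0$. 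The cleanest way: the group $S_n$ (permutations of the $n$ factors) acts on $\Spec A_0$, and a $\Gamma$-action by algebra automorphisms over a connected base is the same as a homomorphism $\Gamma\to S_n$ composed with a twist by a cocycle valued in the torus $\mathbf{G}_m^n = \Aut$-of-the-grading. But in fact, since we only need the fibers to \emph{contain} all Galois extensions and may include extra points, I can afford to be lavish: take $\cF_\Gamma$ to be (an open subscheme of) the scheme parameterizing $\Gamma$-algebra structures on a locally-free rank-$n$ module, which is a locally closed subscheme of an affine space of structure constants, visibly finitely presented and affine over $\bZ$.

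The key steps, in order: (1) Build the "universal" rank-$n$ commutative algebra with $\Gamma$-action as a closed subscheme $\cF_\Gamma^{\mathrm{pre}}$ of an affine space over $\bZ$ cut out by the associativity, commutativity, unit, and $\Gamma$-equivariance equations on structure constants — finitely many polynomial equations, so $\cF_\Gamma^{\mathrm{pre}}$ is affine and finitely presented. (2) Impose the étale (equivalently, for a finite flat algebra, the separability/discriminant-invertibility) condition by inverting the discriminant; this is an open condition, giving a finitely presented affine $\cF_\Gamma$, and $\Psi_\Gamma\colon \cE_\Gamma\to\cF_\Gamma$ is then finite étale of degree $n$ by construction, with the tautological $\Gamma$-action over $\cF_\Gamma$. (3) Prove claim \eqref{lem: fam of finite etale algebras: 1}: for $x\in\cF_\Gamma(F)$ we have a finite étale $F$-algebra $(\cE_\Gamma)_x$ of rank $n$ with $\Gamma$-action, and I need $(\cE_\Gamma)_x/\Gamma \cong \Spec F$, i.e. $\big((\cE_\Gamma)_x\big)^\Gamma = F$ (using Lemma~\ref{lem: qoutient of etale}). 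This is the one place the bare parameter scheme is too big — a general $\Gamma$-action need not have $F$ as its fixed ring — so I must further restrict $\cF_\Gamma$ to the locus where $\Gamma$ acts with $F$ as invariants. The trick: the fixed ring has rank $= \dim_F\big((\cE_\Gamma)_x\big)^\Gamma$, and "$\Gamma$ acts with $1$-dimensional invariants" is equivalent (over the étale algebra, after a faithfully flat base change splitting it) to "$\Gamma$ acts with a single orbit on the $n$ geometric points," which is an open-and-closed condition on the base; intersecting with it gives the final $\cF_\Gamma$, still finitely presented affine, and now \eqref{lem: fam of finite etale algebras: 1} holds by construction together with Lemma~\ref{lem: qoutient of etale}. (4) Prove claim \eqref{lem: fam of finite etale algebras: 2}: given a Galois extension $E/F$ with $\Gal(E/F)=\Gamma$, the normal basis theorem gives an $F$-algebra-and-$\Gamma$-module isomorphism $E\cong F[\Gamma]$ as a module but I want it at the level of the classifying point — more directly, $E$ is a finite étale $F$-algebra of rank $n$ with a $\Gamma$-action and $E^\Gamma=F$, so it \emph{defines} a point $\nu\in\cF_\Gamma(F)$ with $(\cE_\Gamma)_\nu\cong\Spec E$ $\Gamma$-equivariantly, essentially tautologically once the moduli interpretation of $\cF_\Gamma$ is nailed down.

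The main obstacle I anticipate is step (3) — ensuring the fixed-point ring of the tautological family is exactly the base, uniformly in all fields $F$ including positive characteristic. The subtlety is that "$F$-dimension of invariants" can jump, and the naive condition "$\Gamma$ acts freely" is both too strong (Galois actions on $\Spec E$ are free only on geometric points when $E$ is a field, but $E$ may be a product) and must be phrased purely scheme-theoretically over a possibly-disconnected $\Spec E$. I would handle this by base-changing along the finite étale cover $\cE_\Gamma\to\cF_\Gamma$ itself, over which $\cE_\Gamma\times_{\cF_\Gamma}\cE_\Gamma$ splits as $\coprod$ of copies of $\cE_\Gamma$ indexed by $\Gamma$ exactly on the good locus (this is the standard reformulation of "$\Gamma$-torsor"), making the good locus open and closed; descent then returns it to $\cF_\Gamma$. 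A cleaner alternative, which I would actually pursue to keep the argument short: restrict $\cF_\Gamma$ from the outset to the locus of \emph{$\Gamma$-torsors}, i.e. require $\cE_\Gamma\to\cF_\Gamma$ to be a $\Gamma$-torsor for the fppf topology — this locus is open (and closed) in the parameter scheme, is finitely presented and affine, every fiber then satisfies $(\cE_\Gamma)_x^\Gamma = \cO_x$ by the definition of a torsor together with faithfully flat descent, and every Galois extension $E/F$ is precisely a $\Gamma$-torsor over $\Spec F$, so both \eqref{lem: fam of finite etale algebras: 1} and \eqref{lem: fam of finite etale algebras: 2} fall out of the torsor formalism. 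I expect the bulk of the writing to be the verification that the torsor locus is representable by a finitely presented affine scheme and that Lemma~\ref{lem: qoutient of etale} delivers \eqref{lem: fam of finite etale algebras: 1} in the stated form.
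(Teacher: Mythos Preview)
Your structure-constants plan in steps (1)--(4) is correct and would prove the lemma, but it differs from the paper's argument. One caution first: the initial idea in your opening paragraph, taking $\cE_\Gamma = \Spec(\bZ^\Gamma)\times_{\Spec\bZ}\cF_\Gamma$ with $\cF_\Gamma$ parameterizing only a twisted $\Gamma$-action on the fixed split algebra, cannot work --- every fiber would then be the \emph{split} \'etale algebra $F^{|\Gamma|}$, never a nontrivial field extension, so part~\eqref{lem: fam of finite etale algebras: 2} would fail outright. You do move past this to the correct structure-constants model, though the reason you give (``lavish'') misdiagnoses the problem: the split-algebra family is not too big, it is the wrong shape.

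The paper takes a more explicit route. It parameterizes monic separable polynomials $f$ of degree $m=|\Gamma|$ together with polynomials $h_i$ of degree $<m$ realizing the $\Gamma$-action on the roots of $f$ (plus auxiliary polynomials witnessing the divisibilities $f\mid f\circ h_i$ and $f\mid h_i\circ h_j - h_{\mu_\Gamma(i,j)}$), so that $\cF_\Gamma'=\Spec R$ and $\cE_\Gamma'=\Spec R[z]/(f(z))$ with the map visibly standard \'etale. For part~\eqref{lem: fam of finite etale algebras: 2} the paper invokes the primitive element theorem: write $E=F(\alpha)$, take $f$ to be the minimal polynomial of $\alpha$, and read off the $h_i$ from the Galois action on the roots. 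For part~\eqref{lem: fam of finite etale algebras: 1} the paper restricts to the locus where $\cE_\Gamma'/\Gamma\to\cF_\Gamma'$ has fiber-rank $1$, using that this rank is locally constant (their Lemma~\ref{lem: locally constant fibers}) and that passing to a union of connected components preserves all the needed properties (their Lemma~\ref{lem: connect comp reduction}). This last restriction is exactly your $\Gamma$-torsor locus in different language: for a finite \'etale cover of degree $|\Gamma|$ with $\Gamma$-action, a single $\Gamma$-orbit on geometric fibers forces a free action by orbit--stabilizer.

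What each approach buys: the paper's polynomial model is fully explicit --- one can write down generators and relations for $R$ --- and makes \'etaleness immediate from the standard-\'etale form; your moduli/torsor approach is more conceptual, renders part~\eqref{lem: fam of finite etale algebras: 2} tautological (no primitive element theorem needed), and generalizes more cleanly, at the cost of a slightly longer representability check for the universal algebra and the torsor locus. Both arguments converge on the same ``invariants-equal-base'' cut-down to secure part~\eqref{lem: fam of finite etale algebras: 1}.
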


In order to prove Lemma \ref{lem: fam of finite etale algebras}, we first need to prove a few lemmas.

\begin{lemma}\label{lem: locally constant fibers}
    Let $\phi:X\to S$ be a finite {\'e}tale morphism, and $k$ an algebraically closed field. Define $n_k:S(k)\to \bN$ as $n(\overline{s})=|X_{\overline{s}}|$. Then $n_k$ factors through a function $n:S\to\bN$ which is locally constant.
\end{lemma}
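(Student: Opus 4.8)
The plan is to reduce to a local statement on the base and then use the structure theory of finite étale morphisms. First I would observe that the claim is local on $S$: the function $n_k$ is defined pointwise, so it suffices to show that every point $s \in S$ has an open neighborhood $U$ on which the fiber cardinality over geometric points is constant. In particular I may assume $S = \Spec(R)$ is affine and connected, and I want to show $n_k$ is constant in that case. I would also first check the easy consistency statement — that $n_k$ factors through a function $n : S \to \bN$ on the underlying topological space, i.e. that $|X_{\overline{s}}|$ depends only on the image point $s$ of $\overline{s}$ in $S$, not on the chosen algebraically closed field $k$ nor on the embedding $\kappa(s) \hookrightarrow k$. This follows because for a finite étale morphism the geometric fiber over $s$ has cardinality $\dim_{\kappa(s)} (\mathcal{O}_X \otimes_{\mathcal{O}_S} \kappa(s))$, the rank of the fiber of the (locally free) pushforward sheaf $\phi_* \mathcal{O}_X$, which is a purely algebraic quantity insensitive to the geometric point used to probe it.

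The main content is then: for a finite étale (equivalently, finite locally free and unramified, or finite and flat with étale fibers) morphism $\phi : X \to S$, the rank function $s \mapsto \operatorname{rank}_{\kappa(s)}(\phi_*\mathcal{O}_X)_s$ is locally constant on $S$. This is the standard fact that a finite locally free module has locally constant rank: Zariski-locally on $S$ the sheaf $\phi_*\mathcal{O}_X$ is free of some fixed rank $d$, and then every fiber, in particular every geometric fiber, has cardinality exactly $d$ on that open set. I would cite the relevant statement from \cite{SP} (finite locally free modules / the rank function being locally constant) or deduce it directly: flatness plus finite presentation gives local freeness, and the rank of a free module is visibly locally constant. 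Combining this with the first paragraph gives that $n : S \to \bN$ is locally constant, as desired.

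The only point requiring a little care — and what I would call the main (minor) obstacle — is bookkeeping the passage between the cardinality of the geometric fiber $X_{\overline{s}}$ as a set and the dimension of the fiber algebra: one must know that for a finite étale $S$-scheme the geometric fiber $X_{\overline{s}} = \Spec(\mathcal{O}_X \otimes_{\mathcal{O}_S} k)$ is a finite reduced (indeed étale) $k$-algebra, hence a product of $d$ copies of $k$ where $d = \dim_k$, so that $|X_{\overline{s}}| = d$ exactly. This is precisely where étaleness (not merely finite flatness) is used; finite flat alone would only give $|X_{\overline{s}}| \le d$ with equality governed by separability. With this identification in hand, the argument is immediate, and I would keep the write-up short, emphasizing the reduction to local freeness of $\phi_*\mathcal{O}_X$ and the reducedness of étale fibers.
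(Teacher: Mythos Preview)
Your proposal is correct and follows essentially the same approach as the paper: both arguments reduce to the fact that a finite \'etale morphism is finite locally free, use local freeness of $\phi_*\mathcal{O}_X$ to get a locally constant rank function $n$, and then identify $|X_{\overline{s}}|$ with that rank via the observation that the geometric fiber is $\Spec(k^{n(s)})$. Your write-up is a bit more explicit about why \'etaleness (not merely finite flatness) is needed for the equality $|X_{\overline{s}}| = \dim_k(\mathcal{O}_X \otimes k)$, which is a nice clarification, but there is no substantive difference in strategy.
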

\begin{proof}
    By \cite[Exercise 25.2.C]{Vak17} $\phi$ is finite, flat, and locally of finite presentation, and by \cite[Lemma 29.48.2]{SP}, it is finite and locally free. Therefore, for any point $s\in S$ there is an open neighborhood $U$ such that $(\phi_*\cO_X)|_U$ is free over $\cO_S|_U$. Then the stalk of $\phi_*\cO_X$ at every point $s\in U$ is free with the same rank over the stalk of $\cO_S$ at $s$. We define $n(s)$ to be this rank.

    Now, we define a map $S(k)\to S$ by mapping a morphism $\overline{s}\in S(k)$ to its image, which we denote by $s\in S$. This gives us the required factorization as $$X_{\overline{s}}\cong \Spec ((\phi_*\cO_X)_s\otimes_{(\cO_S)_s} k)\cong \Spec(k^{n(s)})\cong \bigsqcup_{i=1}^{n(s)}\Spec(k)$$
\end{proof}
\begin{lemma}\label{lem: connect comp reduction}
    Let $\Gamma$ be a finite group, $X$, $Y$ be affine finitely presented $\bZ$-schemes, and $\psi:X\to Y$ a finite {\'e}tale morphism. Endow $X$ with a $\Gamma$-action over $Y$. Let $Y'\subseteq Y$ be a union of connected components of $Y$, $X'\coloneqq \psi^{-1}(Y')$ and $\psi'=\psi|_{X'}$. Then $X'$ is also a union of connected components of $X$, and $X'$,  $Y'$ and $\psi'$ preserve all the properties we described for $X$, $Y$ and $\psi$.
\end{lemma}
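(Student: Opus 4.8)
The plan is to reduce everything to one observation: a finitely presented affine $\bZ$-scheme is Noetherian, so that ``a union of connected components'' is automatically open and closed. Once that is in hand, each assertion follows formally from the stability of the relevant properties under open immersions and under base change.

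First I would record that $Y$, being of finite type over the Noetherian ring $\bZ$, is Noetherian, and hence has only finitely many connected components, each of which is clopen. Consequently $Y'$ — a finite union of such components — is a clopen subscheme of $Y$. Writing $Y=\Spec(A)$, a clopen subset of $\Spec(A)$ corresponds to an idempotent $e\in A$ with $A\cong eA\times(1-e)A$, so $Y'\cong\Spec(eA)$ is affine and, being cut out by the quotient $A\twoheadrightarrow eA$, is again finitely presented over $\bZ$. Next I would argue that $X'=\psi^{-1}(Y')$ is clopen in $X$ (the preimage of a clopen set under a continuous map), and that a clopen subset of any scheme is a union of connected components: each connected component $C$ of $X$ meets $X'$ in a clopen subset of $C$, so by connectedness of $C$ either $C\subseteq X'$ or $C\cap X'=\emptyset$; hence $X'=\bigcup_{C\subseteq X'}C$. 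This proves the first assertion, and running the idempotent argument again with $X=\Spec(B)$ shows $X'$ is affine and finitely presented over $\bZ$.

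It then remains to transfer the morphism-theoretic data. Since $Y'\hookrightarrow Y$ is an open immersion and $X'=\psi^{-1}(Y')$, there is a canonical identification $X'\cong X\times_Y Y'$ under which $\psi'$ becomes the base change of $\psi$ along $Y'\to Y$; as finite {\'e}tale morphisms are preserved under base change, $\psi'$ is finite {\'e}tale. For the $\Gamma$-action, I would note that each $\sigma\in\Gamma$ acts on $X$ by a $Y$-automorphism, which preserves the fibers of $\psi$ and therefore maps $X'=\psi^{-1}(Y')$ onto itself; restriction gives $\sigma\mapsto\sigma|_{X'}$, a group homomorphism $\Gamma\to\Aut_{Y'}(X')$, i.e. a $\Gamma$-action on $X'$ over $Y'$.

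I do not expect a genuine obstacle here. The only point requiring care is that ``union of connected components'' is being used in a context where it must be interpreted as a clopen subscheme; this is precisely where the finite-presentation hypothesis on $Y$ (equivalently, its Noetherianity) is needed, since without it a union of connected components need not be open and the preimage argument would break down. Everything else is routine bookkeeping with open immersions and base change.
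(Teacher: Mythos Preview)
Your proposal is correct and follows essentially the same approach as the paper: both use Noetherianity (from finite presentation over $\bZ$) to ensure the relevant unions of connected components are clopen, deduce affineness and finite presentation from the resulting idempotent/product decomposition, and obtain the remaining properties ($\psi'$ finite \'etale, $\Gamma$-action restricts) from the Cartesian square $X'\cong X\times_Y Y'$ and stability under base change. The only cosmetic difference is that you start by decomposing $Y$ while the paper starts by decomposing $X$, but the arguments are otherwise interchangeable.
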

\begin{proof}
    First, the facts that $X'$ is also a union of connected components of $X$ and that for any $\gamma\in \Gamma$ $\gamma(X')= X'$ over $Y'$ are topological in nature and can be easily checked.

    Now, as $X$ is finitely presented $\bZ$-scheme it is Noetherian. Therefore, we can write $X=\bigsqcup_{i=1}^n C_i$, where $C_i$ are connected components. Without loss, we may assume $X'=\bigsqcup_{i=1}^m C_i$ for some $m\leq n$.

    Any connected component is a closed subscheme of an affine scheme, hence affine. Therefore, we can write $C_i=\Spec(A_i)$, $X'=\Spec(\prod_{i=1}^m A_i)$ and $X=\Spec(\prod_{i=1}^n A_i)$. Accordingly, $X'$ is finitely presented over $X$, hence over $\bZ$. The same can be shown for $Y'$.

    For the remainder of the properties, notice that: \[
        \begin{tikzcd}
            X' \arrow[r] \arrow[d] & X \arrow[d] \\
            Y' \arrow[r] & Y
        \end{tikzcd}
    \]
    is Cartesian, and finiteness and {\'e}taleness are preserved under base change.
\end{proof}
We return to the proof of Lemma \ref{lem: fam of finite etale algebras}. The main idea of our proof is to emulate the properties of Galois extensions in scheme theoretic tools.

\begin{proof}[Proof of Lemma \ref{lem: fam of finite etale algebras}]
    Let $m = |\Gamma|$, and enumerate $\Gamma$ as $\{g_1, \ldots, g_{m}\}$. For all $i, j \leq m$, there exists $k \leq m$ such that $g_k = g_i g_j$. We denote this by $\mu_\Gamma(i, j) \coloneqq k$.

    Next, define a functor $\underline{\cF_\Gamma'} : \mathbf{Ring} \to \mathbf{Set}$ as follows: for a ring $A$, $\underline{\cF_\Gamma'}$ consists of tuples $(f, (h_i)_{i\leq m}, (d_i)_{i\leq m}, (e_{i,j})_{i,j\leq m}, z)$, where:
    \begin{enumerate}
        \item $f$ is a monic polynomial of degree $m$ with coefficients in $A$.
        \item $(h_i)_{i\leq m}$ is a sequence of  polynomials of degree at most $m-1$, with coefficients in $A$ (corresponding to condition \ref{proof: fam of finite etale algebras: b} below).
        \item $(d_i)_{i\leq m}$ is a sequence of polynomials of degree at most $m(m-1) - m$, with coefficients in $A$ (corresponding to condition \ref{proof: fam of finite etale algebras: b} below).
        \item \label{proof: fam of finite etale algebras: 4} $(e_{i,j})_{i,j\leq m}$ is a sequence of polynomials of degree at most $(m-1)^2 - m$, with coefficients in $A$ (corresponding to condition \ref{proof: fam of finite etale algebras: c} below).
    \end{enumerate}
    In cases where the bound on the degree becomes $-1$, the corresponding polynomial must be the zero polynomial.

    These tuples must satisfy the following conditions:
    \begin{enumerate}[label=\alph*.]
        \item \label{proof: fam of finite etale algebras: a} $\text{res}(f, f') \in A^\times$, meaning $f$ is separable;
        \item \label{proof: fam of finite etale algebras: b} For all $i \leq m$, $f \cdot d_i = f \circ h_i$, meaning that $h_i$ preserves the roots of $f$;
        \item \label{proof: fam of finite etale algebras: c} For all $i, j \leq m$, $f \cdot e_{i,j} = h_i \circ h_j - h_{\mu_\Gamma'(i,j)}$, meaning that the polynomials $(h_i)_{i\leq m}$ act as $\Gamma$ on the roots of $f$.
    \end{enumerate}

    Next, define a functor $\underline{\cE_\Gamma'} : \bf{Ring} \to \bf{Set}$ by augmenting $\underline{\cF_\Gamma'}(A)$ with an additional element $z \in A$ such that $f(z) = 0$. Define a $\Gamma$-action on $\underline{\cE_\Gamma'}(A)$ by:
    \[
        g_i \cdot (f, (h_i)_{i\leq m}, (d_i)_{i\leq m}, (e_{i,j})_{i,j\leq m}, z) \mapsto (f, (h_i)_{i\leq m}, (d_i)_{i\leq m}, (e_{i,j})_{i,j\leq m}, h_i(z)).
    \]

    We show that $\underline{\cE_\Gamma'}$ and $\underline{\cF_\Gamma'}$ are representable by finitely presented affine $\bZ$-schemes, denoted by the $\cF_\Gamma'$ and $\cE_\Gamma'$, respectively. Consider the polynomial ring over $\bZ$ with variables corresponding to the coefficients of the polynomials described in the definition of $\underline{\cF'_\Gamma}$. This ring of integer multi-variables polynomials, is then quotiented by the ideal generated by the polynomials describing conditions \ref{proof: fam of finite etale algebras: a}, \ref{proof: fam of finite etale algebras: b} and \ref{proof: fam of finite etale algebras: c}.

    The construction in the last paragraph, allows us to write $\cF_\Gamma' \coloneqq \Spec\,R$, and $\cE_\Gamma' \coloneqq \Spec\,R[z]/(f(z))$. Therefore, the natural projection $\Psi'_{\Gamma} : \cE'_{\Gamma} \to \cF'_{\Gamma}$ is {\'e}tale by definition (see e.g. \cite[Definition 12.6.2]{Vak17}) and by condition \ref{proof: fam of finite etale algebras: a}. Moreover, it is finite as $R[z]/(f(z))$ is a finite $R$-module. In addition   the action of $\Gamma$ on $\cE'_\Gamma(A)$ is functorial in $A$ over $\cF'_\Gamma$ and the schemes are affine, so we have an action of $\Gamma$ on $\cE'_\Gamma$ over $\cF'_\Gamma$ (see Definition \ref{def: group action on  a scheme} and the remark following it).

    Now, by Lemma \ref{lem: qoutient of etale} $\underline{\cE'_\Gamma/\Gamma}$, is representable by $\Spec ((R[z]/(f(z)))^\Gamma)$. Let $n$ be as in Lemma \ref{lem: locally constant fibers}, and define $\cF_\Gamma\subseteq \cF_\Gamma'$ to be $n^{-1}(1)$, $\cE_\Gamma\coloneqq\Psi_\Gamma'^{-1}(\cF_\Gamma)\subseteq \cE_\Gamma'$, and $\Psi_\Gamma\coloneqq \Psi_\Gamma'|_{\cE_\Gamma}$. By Lemma \ref{lem: connect comp reduction}, $\Psi_{\Gamma} : \cE_{\Gamma} \rightarrow  \cF_{\Gamma}$ keeps all the required properties. Moreover, all of our construction are preserved under base change, hence clause \eqref{lem: fam of finite etale algebras: 1} follows.

    Now, for clause \eqref{lem: fam of finite etale algebras: 2}, we construct the point $\nu \in \cF_\Gamma(F)$ for any Galois extension $E/F$, such that $\Gamma=\Gal(E/F)$. We first construct $\nu$ as an element of $\cF_\Gamma'(F)$ and then show its image lies in $\cF_\Gamma$, hence it is also an element of $\cF_\Gamma(F)$.

    By the primitive element theorem, there is an $\alpha \in E$ such that $E = F(\alpha)$. We construct a tuple $(f^0, (h^0_i)_{i\leq m}, (d^0_i)_{i\leq m}, (e^0_{i,j})_{i,j\leq m})$ of polynomials in $F[x]$.
    \begin{enumerate}
        \item $f^0$ is the minimal (monic) polynomial of $\alpha$ over $F$.
        \item Let $\{\beta_i\}_{i\leq m}$ be the roots of $f^0$ in $E$, with $\beta_0 = \alpha$. Because each $\beta_i \in E = F(\alpha)$, and $E/F$ is a degree $m$ extension, then $\beta_i=h_i^0(\alpha)$ for polynomials $h_i^0(x)\in F[x]$ of degree $< m$.
        \item Since $f^0 \circ h^0_i(\alpha) = 0$ for all $i\leq m$, we have $f^0 \mid f^0 \circ h^0_i$. Therefore, there exists polynomials $d_i^0(x)\in F[x]$, such that $f^0\cdot d_i^0 = f^0\circ h_i^0$. This construction of $d_i^0$ also ensures that $\deg(d_i^0) < m(m-1) - m$.
        \item Similarly, construct $(e^0_{i,j})_{i,j \leq m}$ to satisfy condition \ref{proof: fam of finite etale algebras: c}.
    \end{enumerate}
    By representability this tuple corresponds to a point $\nu\in\cF'_{\Gamma}(F)$.

    We verify that
    \[
        (\cE'_\Gamma)_\nu\cong\Spec(E),
    \]
    as required. We are working with affine schemes, therefore it is enough to show that\[
        F\otimes_R R[z]/(f(z))\cong E,
    \]
    which is clear as
    \[
        F\otimes_{R} R[z]/(f(z))\cong F[z]/(f(z))\cong F(\alpha)\cong E.
    \]
    Moreover, our construction ensures that $\{h^0_i\}_{i<n}$ acts as the Galois action. Finally, notice that $\nu\in\cF_\Gamma(F)$ as $n(Im(\nu))=\dim_F(E^\Gamma)=1$.
\end{proof}

\subsection{Versal family of quasi-split reductive groups}
The next part of our proof is to construct a versal family of quasi-split reductive groups. First, we construct such a versal family when the root data of the quasi-split reductive groups are fixed.

\begin{lemma} \label{lem: scheme quasi-split fixed r.d.}
    Let $\fX$ be a root datum. There is a scheme $\cF_\fX$  of finite type and a smooth $\cF_\fX$-affine group scheme of finite type $\cH_\fX \to\cF_\fX$ such that:
    \begin{enumerate}
        \item \label{lem: scheme quasi-split fixed r.d.: 1}For every field $F$ and every $x\in\cF_\fX(F)$, the group $(\cH_\fX)_x$ is a connected quasi-split reductive group, and its absolute root datum is $\fX$. In particular, $\cH_\fX$  is a reductive $\cF_\fX$-group scheme.
        \item \label{lem: scheme quasi-split fixed r.d.: 2}For every connected, quasi-split reductive group $\mathbf{G}$ over a field $F$, with absolute root datum $\fX$, there exists a point $x\in \cF_\fX(F)$ such that $\mathbf{G}\cong (\cH_\fX)_x$.
    \end{enumerate}
\end{lemma}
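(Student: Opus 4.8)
The plan is to build $\cF_\fX$ and $\cH_\fX$ by gluing, over a finite index set of ``diagram automorphism groups'', copies of the family of étale algebras of Lemma~\ref{lem: fam of finite etale algebras} and twisting the split group scheme along them. I would first fix a split reductive $\bZ$-group scheme $\mathbf{G}_0$ with root datum $\fX$ equipped with a pinning (its existence and uniqueness up to isomorphism being the relative existence theorem, \cite{SGA3}, see also \cite{Con14,Milne_reductive_groups}), and let $A\coloneqq\Aut(\fX,\Delta)$ be the finite automorphism group of the based root datum, acting on $\mathbf{G}_0$ over $\bZ$ through the pinning. For every subgroup $\Gamma\le A$ let $\Psi_\Gamma:\cE_\Gamma\to\cF_\Gamma$ be as in Lemma~\ref{lem: fam of finite etale algebras}, put $\mathcal{G}_\Gamma\coloneqq\mathbf{G}_{0,\cF_\Gamma}{}^{\wedge}_{\cF_\Gamma}\cE_\Gamma\cong\Res_{\cE_\Gamma/\cF_\Gamma}(\mathbf{G}_{0,\cE_\Gamma})$, an affine finite-type $\cF_\Gamma$-group scheme by Lemma~\ref{lem: internal hom repr} and Remark~\ref{rem: int hom weil restrict}, and let $\Gamma$ act on it by $*_c$ (Definition~\ref{def: cocycle to action}), where $c$ is the $1$-cocycle equal to the inclusion $\Gamma\hookrightarrow A$; concretely $*_c(\gamma)$ is the commuting composite of the $\gamma$-action on $\cE_\Gamma$ with the $\gamma$-action on $\mathbf{G}_0$, so $c$ is a pinning-preserving $1$-cocycle for the (trivial) conjugation action. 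I would then set
\[
    \cF_\fX\coloneqq\bigsqcup_{\Gamma\le A}\cF_\Gamma, \qquad \cH_\fX\coloneqq\bigsqcup_{\Gamma\le A}\mathcal{G}_\Gamma^{\,*_c(\Gamma)},
\]
which is affine and of finite type over $\cF_\fX$ by Lemma~\ref{lem: fixed points scheme}, the index set being finite.

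The structural heart of the proof is the observation that $\cE_\Gamma\to\cF_\Gamma$ is an \emph{étale $\Gamma$-torsor}: it is finite étale of constant degree $|\Gamma|$, and clause~\eqref{lem: fam of finite etale algebras: 1} of Lemma~\ref{lem: fam of finite etale algebras}, together with the fact that the $\Gamma$-quotient of $\cE_\Gamma$ is $\cF_\Gamma$, forces $\Gamma$ to act simply transitively on every geometric fibre, so the natural map $\Gamma\times\cF_\Gamma\to\cE_\Gamma\times_{\cF_\Gamma}\cE_\Gamma$, being an isomorphism of finite étale schemes on geometric fibres, is an isomorphism. Pulling $\cH_\fX$ back along this torsor, and using that internal morphism spaces and fixed points commute with base change (Lemmas~\ref{lem: internal hom base change} and~\ref{lem: fixed points scheme}), I get
\[
    \mathcal{G}_\Gamma^{\,*_c(\Gamma)}\times_{\cF_\Gamma}\cE_\Gamma\;\cong\;\Bigl(\textstyle\prod_{\gamma\in\Gamma}\mathbf{G}_{0,\cE_\Gamma}\Bigr)^{\!*_c(\Gamma)}\;\cong\;\mathbf{G}_{0,\cE_\Gamma},
\]
the last step because $\Gamma$ permutes the factors by the regular representation and twists each by $A$, so that projection to the identity factor identifies the fixed points with $\mathbf{G}_{0,\cE_\Gamma}$ (this is where $c$ being a homomorphism enters). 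Hence after the fppf base change $\cE_\Gamma\to\cF_\Gamma$ the group scheme $\cH_\fX$ becomes split reductive, and since smoothness, affineness and the condition ``connected reductive geometric fibres'' all descend along fppf covers, $\cH_\fX\to\cF_\fX$ is a reductive group scheme.

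For part~\eqref{lem: scheme quasi-split fixed r.d.: 1}, I fix a field $F$ and a point $x$ lying, say, in the $\Gamma$-component; then $(\cE_\Gamma)_x=\Spec E$ for a finite étale $F$-algebra $E$ of dimension $|\Gamma|$ with a $\Gamma$-action and $E^\Gamma=F$, so by base change $(\cH_\fX)_x\cong(\mathbf{G}_0)_{E/F}^{\,*_c(\Gamma)}$, which Lemma~\ref{lem: etale extension reduction - quasi-split} (applied to the split pinned $F$-group $\mathbf{G}_{0,F}$, the algebra $E$ and the pinning-preserving cocycle $c$) identifies with a connected quasi-split reductive $F$-form of $\mathbf{G}_{0,F}$, necessarily with absolute root datum $\fX$. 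For part~\eqref{lem: scheme quasi-split fixed r.d.: 2}, given a connected quasi-split reductive $\mathbf{G}/F$ with absolute root datum $\fX$, the classification of quasi-split forms recalled in \S\ref{sec: introduction} attaches to it, over some finite Galois splitting field, a homomorphism to $A$; replacing the splitting field by the fixed field of its kernel gives a finite Galois $E/F$ with $\Gal(E/F)=\Gamma\le A$ and, by the Galois case of Lemma~\ref{lem: etale extension reduction - quasi-split}, an isomorphism $\mathbf{G}\cong(\mathbf{G}_0)_{E/F}^{\,*_c(\Gamma)}$ with $c$ the inclusion $\Gamma\hookrightarrow A$. Lemma~\ref{lem: fam of finite etale algebras}\eqref{lem: fam of finite etale algebras: 2} then provides $\nu\in\cF_\Gamma(F)$ with $(\cE_\Gamma)_\nu\cong\Spec E$ carrying the Galois action, and the computation of part~\eqref{lem: scheme quasi-split fixed r.d.: 1} gives $(\cH_\fX)_\nu\cong\mathbf{G}$, viewing $\nu$ as a point of the $\Gamma$-component of $\cF_\fX$.

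I expect the main obstacle to be precisely the claim in the second paragraph that $\cH_\fX\to\cF_\fX$ is smooth over the whole base rather than merely on fibres --- fixed-point schemes of finite constant groups can fail to be flat in characteristic dividing the order of the group --- so one must genuinely use clause~\eqref{lem: fam of finite etale algebras: 1} of the étale-algebra family to see that $\cE_\Gamma$ is an honest torsor and hence that $\cH_\fX$ is étale-locally the Chevalley group scheme $\mathbf{G}_0$. Everything else is base-change bookkeeping (Lemmas~\ref{lem: limit base change}, \ref{lem: fixed points scheme}, \ref{lem: internal hom base change}) together with the fibrewise reduction Lemmas~\ref{lem: etale extension reduction} and~\ref{lem: etale extension reduction - quasi-split}.
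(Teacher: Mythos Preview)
Your argument is correct and is in fact a tidier variant of the paper's proof. The paper indexes its disjoint union over a set $D_\fX$ of pairs $(\Gamma,[\alpha])$, where $\Gamma$ runs over all finite groups of order at most a bound $n_\fX=C^{spt}(\dim\fX)$ coming from \cite[Lemma B.1.1]{AA24} and $[\alpha]$ over conjugacy classes of homomorphisms $\Gamma\to\Aut(\fX,\Delta)$; you instead index only over subgroups $\Gamma\le A=\Aut(\fX,\Delta)$ with $c$ the inclusion. Your reduction---replacing the splitting field by the fixed field of the kernel of the diagram-automorphism cocycle so that the homomorphism becomes injective---makes the external bound $n_\fX$ unnecessary and shrinks the index set, at the cost of a (harmless) relabeling of Galois groups via $\phi:\Gal(E/F)\xrightarrow{\sim}\Gamma$ when invoking Lemma~\ref{lem: fam of finite etale algebras}\eqref{lem: fam of finite etale algebras: 2}. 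The fibre computations are then the same in both proofs, resting on Lemmas~\ref{lem: etale extension reduction} and~\ref{lem: etale extension reduction - quasi-split}.

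Your second paragraph, showing that $\cE_\Gamma\to\cF_\Gamma$ is an étale $\Gamma$-torsor and hence that $\cH_\fX$ is étale-locally the Chevalley scheme, is a genuine addition: the paper verifies the fibre-by-fibre conditions of Definition~\ref{def: reductive over S} but does not spell out why the fixed-point scheme is \emph{smooth over the base}. Your torsor argument (simple transitivity on geometric fibres from clause~\eqref{lem: fam of finite etale algebras: 1} plus the degree count, then fppf descent of smoothness) fills this in cleanly. So your route buys a smaller index set and an explicit smoothness proof; the paper's route keeps the splitting field unchanged and is closer in shape to the later inner-form construction in Lemma~\ref{lem: versal family reductive fixed r.d.}.
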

The proof closely follows the argument in \cite[Appendix B]{AA24}.
\begin{proof}
    By \cite[Theorem 6.1.16(2)]{Con14}, there exists a split reductive $\bZ$-group scheme  $(\cG_{\fX}, T_\fX, M_\fX)$ with root datum $\fX$. Choose a pinning $(\Phi^+, \{X_a\}_{a\in \Delta})$ for it (see Definition \ref{def: pinning}).

    Let $\Gamma$ be a finite group, and let $\alpha: \Gamma \to \Aut(\fX, \Delta)$ be a homomorphism. By \cite[Theorem 7.1.9(3)]{Con14}\footnote{The theorem is stated for the relative case, over some base scheme $S$. However by setting $S$ as the argument in the associated functors of points, we obtain the case in Definition \ref{def: group action on a scheme}.}, the pinning $(\Phi^+, \{X_a\}_{a\in \Delta})$ defines an isomorphism
    \begin{equation}\label{eq: rho: Aut(fx) to pinned auto}
        \rho: \Aut(\fX, \Delta) \overset{\sim}{\to} \Aut_\bZ(\cG_\fX, B_\fX, T_\fX,\{X_a\}_{a\in \Delta}) \subseteq \Aut_\bZ(\cG_{\fX}).
    \end{equation}
    Now, define a homomorphism $\alpha_2: \Gamma \to \Aut_\bZ(\cG_{\fX})$ by $\alpha_2 \coloneqq \rho \circ \alpha$.

    Let $\cF_\Gamma$ and $\cE_\Gamma$ be as in Lemma \ref{lem: fam of finite etale algebras}. Define the following internal morphism functor (see Definition \ref{def: internal hom})
    \[
        \underline{\cH'_{\fX,\Gamma}} \coloneqq \underline{( \cG_{\fX} \times_{\bZ} \cF_{\Gamma} ){}^{\wedge}_{\cF_{\Gamma}} \cE_{\Gamma}}.
    \]
    By Lemma \ref{lem: internal hom repr} it is representable by an $\cF_\Gamma$-affine scheme of finite type, which we denote as $\cH'_{\fX,\Gamma}$.

    Define $*_{\alpha_2}: \Gamma \to \Aut_{\cF_\Gamma}(\cH'_{\fX,\Gamma})$, by first acting by the action induced by the $\Gamma$-action on $\cE_\Gamma$ and then by the $\Gamma$-action induced from $\alpha_2$ on $\cG_\fX$. The notation is justified as this $\Gamma$-action resembles the construction in Definition \ref{def: cocycle to action}. Now, define the following functor (see Definition \ref{def: fixed points/quotient functor/space})
    \[
        \underline{\cH_{\fX,\alpha}} \coloneqq \underline{(\cH'_{\fX,\Gamma})^{*_{\alpha_2}(\Gamma)}}.
    \]
    By Lemma \ref{lem: fixed points scheme}, it is representable by an affine $\cF_\Gamma$-scheme.

    Denote $n_\fX:=C^{spt}(\dim (\fX))$, where $C^{spt}$ is the function given by \cite[Lemma B.1.1]{AA24}. Let \[
        D_\fX \coloneqq \left\{ (\Gamma,\kappa) \;\middle|\;
        \begin{aligned}
             & \Gamma\text{ is a group of index} \leq n_\fX ;                \\
             & \kappa \in \Mor(\Gamma,\Aut(\fX,\Delta))/ad(\Aut(\fX,\Delta))
        \end{aligned}
        \right\},
    \]
    and we claim that $D_\fX$ is finite. First, notice that there are finitely many finite group up to order $n_\fX$. Secondly, by \cite[Lemma B.3.2]{AA24}\footnote{We can use this lemma, although it requires $\Gamma$  to be abelian, as the abelian assumption is not used in the proof.}, there are also finitely many actions of a finite groups $\Gamma$ on a given root datum, up to adjunction.

    Now, for any equivalence class $\kappa \in \Mor(\Gamma,\Aut(\fX,\Delta))/ad(\Aut(\fX,\Delta))$ choose a representative $\alpha \in \kappa$. In addition, for any $(\Gamma,[\alpha]) \in D_\fX$ create a copy of $\cE_\Gamma\to\cF_\Gamma$ denotes by $(\cE_\Gamma)_{[\alpha]}\to(\cF_\Gamma)_{[\alpha]}$.  Finally, set
    $$
        \cF_\fX:=
        \bigsqcup_{(\Gamma,[\alpha]) \in D_\fX}
        (\cF_{\Gamma})_{[\alpha]},$$
    and
    $$
        \cH_{\fX}:=
        \bigsqcup_{(\Gamma,[\alpha]) \in D_\fX}
        \cH_{\fX,\alpha},
    $$where each $\cH_{\fX,\alpha}$ is build using the $(\cE_\Gamma)_{[\alpha]}\to(\cF_\Gamma)_{[\alpha]}$ copy.
    Define the morphism $\cH_{\fX} \to \cF_{\fX}$ as the disjoint union of the structure morphisms $\cH_{\fX,\alpha} \to (\cF_{\Gamma})_{[\alpha]}$.

    We claim that $\cH_{\fX} \to \cF_{\fX}$ satisfies the requirements of the lemma. For clause \eqref{lem: scheme quasi-split fixed r.d.: 1}, it suffices to consider the fibers at each $\cH_{\fX,\alpha} \to (\cF_{\Gamma})_{[\alpha]}$.

    By Lemma \ref{lem: fixed points scheme},  for any field $F$ and any point $x\in(\cF_{\Gamma})_{[\alpha]}(F)$:\[(\cH_{\fX,\alpha})_x = ((\cH'_{\fX,\Gamma})^{*_{\alpha_2}(\Gamma)})_x \cong ((\cH'_{\fX,\Gamma})_x)^{*_{\alpha_2}(\Gamma)}.
    \]
    Moreover, as $(\cE_\Gamma)_{[\alpha]}\to(\cF_\Gamma)_{[\alpha]}$ is finite and {\'e}tale we have $((\cE_\Gamma)_{[\alpha]})_x\cong\Spec(E)$ for some finite {\'e}tale $F$-algebra $E$ of degree $|\Gamma|$ \cite[Lemmas 29.36.4 and 29.36.7]{SP}. Therefore, by Lemma \ref{lem: internal hom base change},  we get that $(\cH'_{\fX,\Gamma})_x$, as an $F$-scheme represent the internal morphism functor $\underline{(\cG_\fX)_{E/F}}$. Notice, that $(\cH'_{\fX,\Gamma})_x$ can also be considered as an $\cF_\Gamma$-scheme by $x$.

    Denote by $(*_{\alpha_2})_{E/F}:\Gamma\to\Aut_F((\cG_\fX)_{E/F})$, the induced homomorphism from $*_{\alpha_2}$. We get that $(\cH_{\fX,\alpha})_x$ represents the functor $\underline{((\cG_\fX)_{E/F})^{(*_{\alpha_2})_{E/F}(\Gamma)}}$ (see Definition \ref{def: fixed points/quotient functor/space}). In conclusion, we get
    \begin{equation}\label{eq: fiber of H}
        (\cH_{\fX,\alpha})_x\cong ((\cG_\fX)_{E/F})^{(*_{\alpha_2})_{E/F}(\Gamma)}.
    \end{equation}

    Notice, the $\Gamma$-action of $\cE_\Gamma$ over $\cF_\Gamma$, induce a $\Gamma$-action on $\Aut_F((\cG_\fX)_{E/F})$. This $\Gamma$-action is trivial when reduced to the subgroup of $\Aut_F((\cG_\fX)_{E/F})$ consists of automorphism induced from $\Aut_F((\cG_\fX)_F)$. Therefore, $(\alpha_2)_{E/F}:\Gamma\to\Aut_F((\cG_\fX)_{E/F})$ is a 1-cocycle, and by Definition \ref{def: cocycle to action} $(*_{\alpha_2})_{E/F}=*_{((\alpha_2)_{E/F})}$. Therefore, clause \eqref{lem: scheme quasi-split fixed r.d.: 1} follows from Lemmas \ref{lem: fam of finite etale algebras}\eqref{lem: fam of finite etale algebras: 1}  and  \ref{lem: etale extension reduction - quasi-split}.

    Now, we show that clause \eqref{lem: scheme quasi-split fixed r.d.: 2} holds.  By \cite[Lemma B.1.1]{AA24}, there exists a finite Galois extension $E/F$ of degree $d \leq n_\fX$ such that $\mathbf{G}_E$ splits. Now, as $\mathbf{G}$ is quasi-split, it admits a Borel subgroup $\mathbf{B}\subseteq \mathbf{G}$, such that there exists a split maximal torus $\mathbf{T}\subseteq\mathbf{B}_E\subseteq\mathbf{G}_E$.   We can choose a pinning $\{X'_{a'} \}_{a' \in \Delta'}$ of $(\mathbf{G}_E,\mathbf{B}_E,\mathbf{T})$, in the sense of groups and not group schemes (recall Remark \ref{rem: pinning grp scheme / grp}).

    By \cite[Remark 6.1.2]{Con14}, there is a unique Borel subgroup scheme $T_\fX \subseteq B_{\fX}\subseteq \cG_{\fX}$ corresponding to $\Phi^+$, the positive system of roots we chose as part of the chosen pinning in the beginning of the proof. By Definition \ref{def: pinning}, we get that $((\cG_\fX)_E, (B_\fX)_E, (T_\fX)_E,\{(X_a)_E\}_{a\in \Delta})$ is a pinned reductive group.

    Now, by \cite[Proposition 1.5.5]{Con14}, as the based root datum are isomorphic, we get an $E$-isomorphism
    $$\alpha':((\cG_\fX)_E, (B_\fX)_E, (T_\fX)_E,\{(X_a)_E\}_{a\in \Delta})\to(\mathbf{G}_E,\mathbf{B}_E,\mathbf{T},\{X'_{a'} \}_{a' \in \Delta'}).$$
    Denote $\Gamma\coloneqq \Gal(E/F)$ and $c_{\alpha'}\in Z^1(\Gamma,\Aut_F((\cG_\fX)_E))$ as the 1-cocycle induced by $\alpha'$ (see Definition \ref{def: induced cocycle}).

    We show that $\Im(c_{\alpha'})\in\Aut_F((\cG_\fX)_E, (B_\fX)_E, (T_\fX)_E,\{(X_a)_E\}_{a\in \Delta})$. Notice, that $c_{\alpha'}$ preserve $ (B_\fX)_E$ as the $\Gamma$-action preserve $ (B_\fX)_E$ and $\mathbf{B}_E$. Therefore, $c_{\alpha'}$ also preserve $\mathbf{T}_E$ as $c_{\alpha'}((T_\fX)_E)$ is a maximal torus in $(B_\fX)_E$ and there is a unique one. $c_{\alpha'}$ also preserve $\{(X_\alpha)_E\}_{\alpha \in \Delta}$, and it can be checked directly by the equivalent characterization in \cite[Definition 1.5.4]{Con14}.

    By \cite[corollary 1.5.5]{Con14}, $\Aut_F((\cG_\fX)_E, (B_\fX)_E, (T_\fX)_E,\{(X_a)_E\}_{a\in \Delta})\cong \Out_F((\cG_\fX)_E)$. The cohomology class of $c_{\alpha'}$ in $H^1(\Gamma,\Out_F((\cG_\fX)_E))$ is an homomorphism, as the 1-coboundary condition, together with the quotient by inner automorphism, trivialize the $\Gamma$-action. Recall the homomorphism $\rho$ from \eqref{eq: rho: Aut(fx) to pinned auto}, and define  $\alpha''\coloneqq\rho^{-1}_E\circ c_{\alpha'}$, and let $\alpha$ the chosen representative of $[\alpha'']\in \Mor(\Gamma,\Aut(\fX,S))/ad(\Aut(\fX,S))$, and notice that it is cohomologous to $\alpha''$.

    Now, we construct a point $x\in\cF_\fX(F):=
        \bigsqcup_{(\Gamma,[\alpha]) \in D_\fX}
        (\cF_{\Gamma})_{[\alpha]}(F)$, as the point from  Lemma \ref{lem: fam of finite etale algebras}\eqref{lem: fam of finite etale algebras: 2} on the $[\alpha]$ indexed copy of $\cF_\Gamma$. We verify that $x$ satisfies clause \eqref{lem: scheme quasi-split fixed r.d.: 2}.

    By Equation \eqref{eq: fiber of H} and Lemma \ref{lem: cohomo cocycles iff iso forms} we are left to show that $((\cG_\fX)_{E/F})^{(*_{\alpha_2})_{E/F}(\Gamma)}$ and $\mathbf{G}$ induce cohomologous 1-cocycles in $Z^1(\Gamma,\Aut((\cG_\fX)_E))$. This follows as $((\cG_\fX)_{E/F})^{(*_{\alpha_2})_{E/F}(\Gamma)}$ satisfies the conditions of Lemma \ref{lem: etale extension reduction} for the 1-cocycle $\alpha$ which is cohomologous to $\alpha''$, and $\alpha''$  was induced from $\mathbf{G}$.
\end{proof}

Now, we can generalize Lemma \ref{lem: scheme quasi-split fixed r.d.}, to a versal family of quasi-split reductive groups up to a given rank. We do so by taking disjoint union of the schemes constructed in Lemma \ref{lem: scheme quasi-split fixed r.d.}, over all rood data up to the given rank. Notice, that if we want the scheme in our result to be of finite type, we need to ensure that the disjoint union is finite.

\begin{theorem} \label{thm: Scheme of quasi-split}
    For any integer  $n>0$, there is an affine scheme $\cF _{n}$ of finite type and a smooth $\cF _{n}$-affine group scheme $\cH_n \to \cF_n$ of finite type, such that:
    \begin{enumerate}
        \item \label{thm: scheme quasi-split: 1}For every field $F$ and every $x\in\cF_n(F)$, the group $(\cH_n)_x$ is a connected quasi-split reductive group of rank at most $n$. In particular, $\cH_n$  is a reductive group scheme.
        \item \label{thm: scheme quasi-split: 2}For every connected, quasi-split reductive group $\mathbf{G}$ of rank at most $n$ over a field $F$ there exists a point $x\in \cF_n(F)$ such that $\mathbf{G}\cong (\cH_n)_x$.
    \end{enumerate}
\end{theorem}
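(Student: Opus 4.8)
The plan is to build $\cH_n\to\cF_n$ as a \emph{finite} disjoint union of the families $\cH_\fX\to\cF_\fX$ supplied by Lemma \ref{lem: scheme quasi-split fixed r.d.}, one copy for each isomorphism class of root data of rank at most $n$. First I would fix representatives $\fX_1,\dots,\fX_k$ for the isomorphism classes of root data whose underlying character lattice has rank at most $n$. The crucial observation --- and the one point that needs genuine input rather than bookkeeping --- is that this set is finite: a root datum of rank at most $n$ is pinned down by finitely many discrete choices (a Dynkin diagram of rank $\le n$, the placement of the character lattice between the root and weight lattices of the semisimple part, and a central torus summand of rank $\le n$), each with finitely many possibilities. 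This is precisely the finiteness established in \cite[Appendix B]{AA24}, so I would invoke it rather than reprove it.

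Next I would, for each $i$, take $\cF_{\fX_i}$ and $\cH_{\fX_i}\to\cF_{\fX_i}$ from Lemma \ref{lem: scheme quasi-split fixed r.d.}. Inspecting its proof, $\cF_{\fX_i}$ is a finite disjoint union of the affine $\bZ$-schemes $\cF_\Gamma$ of Lemma \ref{lem: fam of finite etale algebras}, hence affine and of finite type, and $\cH_{\fX_i}$ is $\cF_{\fX_i}$-affine of finite type. I would then set
\[
    \cF_n \coloneqq \bigsqcup_{i=1}^{k}\cF_{\fX_i},\qquad
    \cH_n \coloneqq \bigsqcup_{i=1}^{k}\cH_{\fX_i},
\]
with $\cH_n\to\cF_n$ the disjoint union of the structure morphisms. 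Because the index set $\{1,\dots,k\}$ is finite, $\cF_n$ is affine of finite type and $\cH_n\to\cF_n$ is $\cF_n$-affine of finite type; it is smooth, since smoothness is local on the source and stable under disjoint union; and the group-scheme structures on the $\cH_{\fX_i}\to\cF_{\fX_i}$ assemble componentwise into a group-scheme structure on $\cH_n\to\cF_n$ (the fibered product $\cH_n\times_{\cF_n}\cH_n$ has no cross terms, being $\bigsqcup_i \cH_{\fX_i}\times_{\cF_{\fX_i}}\cH_{\fX_i}$). Since each geometric fiber of $\cH_n$ is connected reductive by Lemma \ref{lem: scheme quasi-split fixed r.d.}\eqref{lem: scheme quasi-split fixed r.d.: 1}, $\cH_n$ is a reductive $\cF_n$-group scheme in the sense of Definition \ref{def: reductive over S}.

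Finally I would verify the two clauses. For clause \eqref{thm: scheme quasi-split: 1}: given a field $F$ and $x\in\cF_n(F)$, since $\Spec F$ is connected the point $x$ factors through a unique component $\cF_{\fX_i}\hookrightarrow\cF_n$, so $(\cH_n)_x\cong(\cH_{\fX_i})_x$, which by Lemma \ref{lem: scheme quasi-split fixed r.d.}\eqref{lem: scheme quasi-split fixed r.d.: 1} is a connected quasi-split reductive $F$-group with absolute root datum $\fX_i$, hence of rank $\dim(\fX_i)\le n$. For clause \eqref{thm: scheme quasi-split: 2}: given a connected quasi-split reductive $F$-group $\mathbf{G}$ of rank at most $n$, its absolute root datum has rank at most $n$, hence is isomorphic to some $\fX_i$, and Lemma \ref{lem: scheme quasi-split fixed r.d.}\eqref{lem: scheme quasi-split fixed r.d.: 2} hands us a point $x\in\cF_{\fX_i}(F)\subseteq\cF_n(F)$ with $\mathbf{G}\cong(\cH_{\fX_i})_x\cong(\cH_n)_x$. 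The only real obstacle, exactly as flagged in the paragraph preceding the statement, is keeping the disjoint union finite, i.e. the finiteness of the set of root data of bounded rank; everything after that is the routine check that a finite disjoint union preserves affineness, finite type, smoothness, the group-scheme structure, and the description of the fibers over fields.
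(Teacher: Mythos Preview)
Your proposal is correct and follows essentially the same approach as the paper's own proof: invoke the finiteness of root data of bounded rank (the paper cites \cite[Lemma B.3.1]{AA24} specifically) and set $\cF_n$ and $\cH_n$ to be the finite disjoint unions of the $\cF_\fX$ and $\cH_\fX$ from Lemma \ref{lem: scheme quasi-split fixed r.d.}. Your write-up is in fact more detailed than the paper's, which dispatches the two clauses in one line each, but the argument is the same.
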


\begin{proof}
    By \cite[Lemma B.3.1]{AA24} there are finitely many root data of a given rank. Let $\cH_\fX\to \cF_\fX$ be as in Lemma \ref{lem: scheme quasi-split fixed r.d.}. Set
    \[
        \cF_n \coloneqq
        \bigsqcup_{\text{root datum } \fX \text{ of rank } \leq\,n }
        \cF_{\fX},
    \]
    and
    \[
        \cH_n \coloneqq
        \bigsqcup_{\text{root datum } \fX \text{ of rank } \leq\,n }
        \cH_{\fX}.
    \]
    Now, clause \eqref{thm: scheme quasi-split: 1} follows as it is a local property, and clause \eqref{thm: scheme quasi-split: 2} follows as the union runs over all required root data.

\end{proof}

\subsection{Versal family of reductive groups}
In the previous section, we constructed a reductive group scheme encoding all the quasi-split reductive groups up to a given rank. In this section we add to this construction all of the inner forms of these quasi-split reductive groups. Therefore, we acquire a versal family of reductive groups, as any reductive group has a (unique) quasi-split inner form (\cite[19.57]{Milne_reductive_groups}).

The first step is to prove that the 1-cocycle functor, which we now define, is representable.

\begin{definition}\label{def: 1-cocycle func}
    Let $\Gamma$ be a finite group, $S$ a scheme, and $\mathbf{G}$ an $S$-group scheme with a $\Gamma$-action. Define the contra-variant functor $\underline{Z^1_{\Gamma,\mathbf{G}}}:\mathbf{Sch}_S\to \mathbf{Set}$, called the \textbf{1-cocycle functor}, by
    \[
        \underline{Z^1_{\Gamma,\mathbf{G}}}(S') = \left\{ \phi : \Gamma \to \mathbf{G}(S') \ \middle|\ \forall \sigma, \tau \in \Gamma, \ \phi(\sigma\tau) = \phi(\sigma) \cdot \prescript{\sigma}{}{\phi(\tau)} \right\},
    \]
    for any $S$-scheme $S'$, where $\prescript{\sigma}{}{(-)}$ denotes the action of $\sigma\in\Gamma$ on $G(S')$.
\end{definition}

\begin{lemma} \label{lem: 1-cocycles scheme}
    Let $\Gamma$, $S$ and $\mathbf{G}$ be as in Definition \ref{def: 1-cocycle func}, then there is an $S$-scheme $Z^1_{\Gamma,\mathbf{G}}$ representing $\underline{Z^1_{\Gamma,\mathbf{G}}}$. Moreover, if $\mathbf{G}$ is affine, then so does $Z^1_{\Gamma,\mathbf{G}}$. Additionally, there are morphisms
    $$\{c_\gamma:Z^1_{\Gamma,\mathbf{G}}\to \mathbf{G}\}_{\gamma\in \Gamma},$$
    such that for any $S$-scheme $S'$, $S$-morphism $\phi': S'\to Z^{1}_{\Gamma,\mathbf{G}}$, and $\phi\in \underline{Z^1_{\Gamma,\mathbf{G}}}(S')$ induced from $\phi'$ by representability, we get
    \[
        c_\gamma \circ \phi' = \phi(\gamma).
    \]
\end{lemma}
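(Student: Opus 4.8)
The plan is to construct $Z^1_{\Gamma,\mathbf{G}}$ as a closed subscheme of the fiber product $\mathbf{G}^{\times\Gamma} \coloneqq \prod_{\gamma\in\Gamma}\mathbf{G}$, the $|\Gamma|$-fold fiber product over $S$. First I would note that a set-theoretic map $\phi\colon\Gamma\to\mathbf{G}(S')$ is literally the same datum as an element of $\mathbf{G}(S')^{\times\Gamma} = (\mathbf{G}^{\times\Gamma})(S')$, so the functor $S'\mapsto\Mor(\Gamma,\mathbf{G}(S'))$ (with no cocycle condition) is represented by $\mathbf{G}^{\times\Gamma}$, which is affine when $\mathbf{G}$ is, being a finite fiber product of affines. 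The tautological maps $c_\gamma\colon\mathbf{G}^{\times\Gamma}\to\mathbf{G}$ are just the projections onto the factor indexed by $\gamma$; by construction of the product, these satisfy $c_\gamma\circ\phi' = \phi(\gamma)$ for any $\phi'\colon S'\to\mathbf{G}^{\times\Gamma}$ with associated tuple $\phi$. Once $Z^1_{\Gamma,\mathbf{G}}$ is cut out inside $\mathbf{G}^{\times\Gamma}$, I would define the required morphisms $c_\gamma$ on $Z^1_{\Gamma,\mathbf{G}}$ to be the restrictions (compositions with the closed immersion) of these projections, and the displayed identity is inherited.

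The core step is to realize the cocycle condition as an equalizer. For each pair $(\sigma,\tau)\in\Gamma\times\Gamma$, consider the two morphisms $\mathbf{G}^{\times\Gamma}\to\mathbf{G}$ given on $S'$-points by $\phi\mapsto\phi(\sigma\tau)$ and $\phi\mapsto\phi(\sigma)\cdot\prescript{\sigma}{}{\phi(\tau)}$. The first is just the projection $c_{\sigma\tau}$. The second is the composite of $(c_\sigma, c_\tau)\colon\mathbf{G}^{\times\Gamma}\to\mathbf{G}\times_S\mathbf{G}$, then $\mathrm{id}\times\rho(\sigma)\colon\mathbf{G}\times_S\mathbf{G}\to\mathbf{G}\times_S\mathbf{G}$ (applying the $\Gamma$-action automorphism $\rho(\sigma)\in\Aut_S(\mathbf{G})$ to the second factor), then the multiplication morphism $\mathbf{G}\times_S\mathbf{G}\to\mathbf{G}$; this is a genuine morphism of schemes since each constituent is. Then $Z^1_{\Gamma,\mathbf{G}}$ is defined as the (finite) fiber product over $(\sigma,\tau)\in\Gamma^2$ of the equalizers of these pairs — equivalently, the closed subscheme of $\mathbf{G}^{\times\Gamma}$ on which all $|\Gamma|^2$ equations hold. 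Finite limits of schemes exist and finite limits of affine schemes are affine (Lemma~\ref{lem: finite lim of scheme exist}), so $Z^1_{\Gamma,\mathbf{G}}$ exists and is affine when $\mathbf{G}$ is. By the Yoneda lemma, to check $Z^1_{\Gamma,\mathbf{G}}$ represents $\underline{Z^1_{\Gamma,\mathbf{G}}}$ it suffices to check the functors of points agree, which is exactly the statement that an $S'$-point of $\mathbf{G}^{\times\Gamma}$ lands in all the equalizers iff the corresponding tuple satisfies the cocycle identities for all $(\sigma,\tau)$.

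I do not expect a serious obstacle here; the statement is essentially a bookkeeping exercise once one observes the cocycle condition is a finite system of equations between morphisms. The one point requiring a little care is making sure the "twisted multiplication" map $\phi\mapsto\phi(\sigma)\cdot\prescript{\sigma}{}{\phi(\tau)}$ is genuinely a morphism of $S$-schemes rather than merely a natural transformation of functors — but this is immediate since $\mathbf{G}$ is a group scheme (so multiplication is a morphism), $\rho(\sigma)$ is an $S$-automorphism of $\mathbf{G}$ by Definition~\ref{def: group action on  a scheme}, and composites and fiber products of morphisms are morphisms. A second minor point is that I am using $\Gamma$ finite in an essential way: the product $\mathbf{G}^{\times\Gamma}$ is over a finite index set and the number of equations is finite, so everything stays in the category of (affine) schemes via Lemma~\ref{lem: finite lim of scheme exist}. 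The compatibility $c_\gamma\circ\phi' = \phi(\gamma)$ is then a formal consequence of how the representing isomorphism identifies $\phi'$ with the tuple $(\phi(\gamma))_{\gamma\in\Gamma}$ and $c_\gamma$ with the $\gamma$-th projection.
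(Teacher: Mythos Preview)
Your proposal is correct and takes essentially the same approach as the paper: both realize $Z^1_{\Gamma,\mathbf{G}}$ as the finite limit inside $\mathbf{G}^{|\Gamma|}$ cut out by the cocycle equations, invoking Lemma~\ref{lem: finite lim of scheme exist} for existence and affineness, with the $c_\gamma$ given by the projections. The only cosmetic difference is packaging---the paper draws a single multi-equalizer diagram with all arrows $p_\gamma$ and $c'_{\sigma,\tau}$ into each $\mathbf{G}_\gamma$, whereas you phrase it as an intersection of pairwise equalizers indexed by $(\sigma,\tau)\in\Gamma^2$---but the resulting limit is the same.
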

\begin{proof}
    We construct the scheme $Z^1_{\Gamma,\mathbf{G}}$ as the limit of a finite diagram of $S$-schemes describing the 1-cocycle condition. Lemma \ref{lem: finite lim of scheme exist} ensures that such a limit exists and affine, as long as all the objects in the diagram are affine.

    Consider the $S$-scheme $\mathbf{G}^{|\Gamma|}\coloneqq\prod_{\gamma\in\Gamma}\mathbf{G}_\gamma$, where $\mathbf{G}_\gamma$ is a copy of $\mathbf{G}$ for each $\gamma \in \Gamma$. For any pair $\sigma, \tau \in \Gamma$, such that $\sigma\tau=\gamma$, we define a map $c'_{\sigma,\tau}:\mathbf{G}^{|\Gamma|}\to \mathbf{G}_{\gamma}$, encoding the 1-cocycle condition, as the composition of the following morphisms:
    \[
        c'_{\sigma,\tau}: \mathbf{G}^{|\Gamma|} \xrightarrow{p_\sigma\times p_\tau}
        \mathbf{G}_{\sigma} \times \mathbf{G}_{\tau} \xrightarrow{id \times \prescript{\sigma}{}{(-)}}
        \mathbf{G}_{\sigma} \times \mathbf{G}_{\tau}
        \xrightarrow{\cong^{\sigma}_{\gamma} \times \cong^{\tau}_{\gamma}}
        \mathbf{G}_{\gamma} \times \mathbf{G}_{\gamma}
        \xrightarrow{m_\gamma}
        \mathbf{G}_{\gamma}
    \]
    where:
    \begin{enumerate}
        \item $p_\sigma$ is the projection to the $\sigma$ coordinate, for any $\sigma\in\Gamma$.
        \item $\prescript{\sigma}{}{(-)}$ is the action of $\sigma$ by the $\Gamma$-action on $\mathbf{G}$.
        \item $\cong^\sigma_\gamma$ is the isomorphism between $\mathbf{G}_{\sigma}$ to $\mathbf{G}_{\gamma}$, for any $\sigma,\gamma\in\Gamma$.
        \item $m_\gamma$ is the multiplication of $\mathbf{G}_{\gamma}$ as a group scheme, for any $\gamma\in\Gamma$.
    \end{enumerate}

    Now, we let $(Z^1_{\Gamma,\mathbf{G}},\{c''_{\gamma}:Z^1_{\Gamma,\mathbf{G}}\to \mathbf{G}_{\gamma}\}_{\gamma \in \Gamma},\iota: Z^1_{\Gamma,\mathbf{G}}\to \mathbf{G}^{|\Gamma|})$ be the limit of the following diagram:
    \[
        \begin{tikzcd}[row sep=1.5cm, column sep=1cm]
            && \mathbf{G}^{|\Gamma|}
            \arrow[lldd, bend right = 90, "p_{\gamma_1}"{xshift=-5ex}]
            \arrow[lldd, bend right = 35, pos=0.35, "c'_{\sigma_1,\tau^1_1}"]
            \arrow[lldd,bend left = 35, "c'_{\sigma_n,\tau^1_n}"{xshift=-9ex}]
            \arrow[rrdd, bend left = 90, "p_{\gamma_n}"]
            \arrow[rrdd, bend left = 35, "c'_{\sigma_1,\tau^n_1}"{xshift=-10ex}]
            \arrow[rrdd,bend right = 35, pos=0.65,"c'_{\sigma_n,\tau^n_n}"]
            && \\
            & \ddots &&  \iddots  & \\
            G_{\gamma_1} && \cdots && G_{\gamma_n}
        \end{tikzcd}
    \]
    where  $\sigma_i$ vary through $\Gamma$ and $\tau^i_j\coloneqq\sigma^{-1}_j\gamma_i$.

    We set $c_\gamma \coloneqq \cong^\gamma\circ c''_{\gamma}$, where $\cong^\gamma:\mathbf{G}_\gamma\to \mathbf{G}$ is an isomorphism, and verify that this construction satisfy the requirement of the lemma. For any $\phi':S'\to Z^1_{\Gamma,\mathbf{G}}$, set $\phi(\gamma) \coloneqq c_\gamma\circ\phi' \in \mathbf{G}(S')$ for any $\gamma\in\Gamma$. Notice, that our construction ensures, that for any $\sigma,\tau\in\Gamma$, we have $c_{\sigma\tau}=c_\sigma\cdot\prescript{\sigma}{}{c_\tau}$, where this notation should be clear from context. Therefore, $\phi$ also satisfies the cocycle condition.   \end{proof}
Similar to the proof of theorem \ref{thm: Scheme of quasi-split}, we first construct a family for a fixed root datum. We then generalize the result to all root data up to a given rank, by taking a (finite) disjoint union.

\begin{lemma} \label{lem: versal family reductive fixed r.d.}
    Let $\fX$ be a root datum. There is a scheme $\cS_\fX$  of finite type and a smooth $\cS_\fX$-affine group scheme of finite type $\cR_\fX \to\cS_\fX$ such that:
    \begin{enumerate}
        \item \label{lem: versal family reductive fixed r.d.: 1}For every field $F$ and every $x\in\cS_\fX(F)$, the group $(\cR_\fX)_x$ is connected, reductive and its absolute root datum is $\fX$. In particular, $\cR_\fX$  is a reductive $\cS_\fX$-group scheme.
        \item \label{lem: versal family reductive fixed r.d.: 2}For every connected, reductive group $\mathbf{G}$ over a field $F$, with absolute root datum $\fX$, there exists a point $x\in \cS_\fX(F)$ such that $\mathbf{G}\cong (\cR_\fX)_x$.
    \end{enumerate}
\end{lemma}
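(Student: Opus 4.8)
The plan is to build $\cR_\fX$ by enriching the quasi-split construction of Lemma~\ref{lem: scheme quasi-split fixed r.d.} with a scheme of $1$-cocycles valued in the inner automorphism group, exactly mirroring the two-step form-classification recalled in the preliminaries (quasi-split forms, then inner twists of them). Concretely, fix the split $\bZ$-group scheme $(\cG_\fX, T_\fX, M_\fX)$ with its pinning as in the proof of Lemma~\ref{lem: scheme quasi-split fixed r.d.}, and recall the finite index set $D_\fX$ of pairs $(\Gamma,[\alpha])$ with $\Gamma$ of order $\le n_\fX = C^{spt}(\dim\fX)$ and $[\alpha]\in\Mor(\Gamma,\Aut(\fX,\Delta))/\mathrm{ad}$. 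For each such pair, over the base $(\cF_\Gamma)_{[\alpha]}$ we already have the quasi-split reductive group scheme $\cH_{\fX,\alpha}$; the new ingredient is to form $\cI_{\fX,\alpha} := \Inn(\cH_{\fX,\alpha})$, the inner automorphism group scheme of $\cH_{\fX,\alpha}$ over $(\cF_\Gamma)_{[\alpha]}$ (equivalently $\cH_{\fX,\alpha}/Z$; this is a smooth affine $(\cF_\Gamma)_{[\alpha]}$-group scheme by the relative theory in \cite{Con14}), equip it with the natural $\Gamma$-action coming from the $\Gamma$-action on $\cE_\Gamma$ over $\cF_\Gamma$, and apply Lemma~\ref{lem: 1-cocycles scheme} to get the affine scheme of finite type $Z^1_{\Gamma,\cI_{\fX,\alpha}} \to (\cF_\Gamma)_{[\alpha]}$ together with the universal cocycle morphisms $\{c_\gamma\}_{\gamma\in\Gamma}$. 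Then set
\[
  \cS_\fX := \bigsqcup_{(\Gamma,[\alpha])\in D_\fX} Z^1_{\Gamma,\cI_{\fX,\alpha}},
\]
which is of finite type since $D_\fX$ is finite.

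Next I would construct $\cR_\fX$ over $\cS_\fX$ componentwise. On the component indexed by $(\Gamma,[\alpha])$, base-change $\cH_{\fX,\alpha}$ to $Z^1_{\Gamma,\cI_{\fX,\alpha}}$ and, using the universal cocycle $c = (c_\gamma)$ with values in $\cI_{\fX,\alpha}$, form the twisted $\Gamma$-action $*_c$ in the sense of Definition~\ref{def: cocycle to action}: that is, act on the internal-hom scheme $\bigl((\cH'_{\fX,\Gamma})_{Z^1}\bigr)$ — the pullback of $\cH'_{\fX,\Gamma} = (\cG_\fX\times_\bZ\cF_\Gamma){}^\wedge_{\cF_\Gamma}\cE_\Gamma$ — first by the $\Gamma$-action inherited from $\cE_\Gamma$ and by $\alpha_2 = \rho\circ\alpha$, and then further by $c_\gamma$. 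Take the fixed-point scheme under $*_c$ (representable and compatible with base change by Lemma~\ref{lem: fixed points scheme}) to obtain $\cR_{\fX,\alpha}\to Z^1_{\Gamma,\cI_{\fX,\alpha}}$, a smooth affine group scheme of finite type. Then $\cR_\fX := \bigsqcup \cR_{\fX,\alpha}$, with the evident structure morphism. The key point making this legitimate is that $Z^1$ of $\Inn$-valued cocycles is genuinely representable — unlike $H^1$ — so we do not quotient by coboundaries; this is exactly why the family is versal rather than universal, as the introduction anticipates.

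For clause~\eqref{lem: versal family reductive fixed r.d.: 1}, I would compute the fiber at an arbitrary $x\in\cS_\fX(F)$: $x$ lies on some component $Z^1_{\Gamma,\cI_{\fX,\alpha}}$, so it gives a point $y\in(\cF_\Gamma)_{[\alpha]}(F)$ (hence a finite étale $F$-algebra $E = \mathrm{Spec}$-dual of $(\cE_\Gamma)_y$ of degree $|\Gamma|$ with $E^\Gamma=F$, by Lemma~\ref{lem: fam of finite etale algebras}\eqref{lem: fam of finite etale algebras: 1}) together with a cocycle $c\in Z^1(\Gamma,\Inn((\cH_{\fX,\alpha})_y))$. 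By base change of the fixed-point and internal-hom constructions (Lemmas~\ref{lem: fixed points scheme}, \ref{lem: internal hom base change}) and the analysis in the proof of Lemma~\ref{lem: scheme quasi-split fixed r.d.}, $(\cR_\fX)_x\cong \bigl((\cG_\fX)_{E/F}\bigr)^{*_{c\cdot(\alpha_2)_{E/F}}(\Gamma)}$; since $(\alpha_2)_{E/F}$ is a cocycle whose image preserves the pinning and $c$ is $\Inn$-valued, the product is again a $1$-cocycle, so by Lemma~\ref{lem: etale extension reduction} this fixed-point scheme is an $F$-form of $\cG_\fX$, hence connected reductive with absolute root datum $\fX$; that the group scheme is reductive over $\cS_\fX$ then follows as this is checkable on fibers. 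For clause~\eqref{lem: versal family reductive fixed r.d.: 2}, given connected reductive $\mathbf{G}/F$ with absolute root datum $\fX$, take its unique quasi-split inner form $\mathbf{G}^{qs}$ (\cite[19.57]{Milne_reductive_groups}); by Lemma~\ref{lem: scheme quasi-split fixed r.d.}\eqref{lem: scheme quasi-split fixed r.d.: 2} there is $(\Gamma,[\alpha])\in D_\fX$ and $y\in(\cF_\Gamma)_{[\alpha]}(F)$ with $(\cH_{\fX,\alpha})_y\cong\mathbf{G}^{qs}$; then $\mathbf{G}$, being an inner form of $\mathbf{G}^{qs}$, is given by some class in $H^1(\Gamma,\Inn(\mathbf{G}^{qs}_E))$ for a splitting Galois $E/F$ with group $\Gamma$, and choosing an actual cocycle representative in $Z^1(\Gamma,\Inn((\cH_{\fX,\alpha})_y))$ — together with the $F$-point $y$ — produces $x\in Z^1_{\Gamma,\cI_{\fX,\alpha}}(F)\subseteq\cS_\fX(F)$ with $(\cR_\fX)_x\cong\mathbf{G}$ by Lemma~\ref{lem: cohomo cocycles iff iso forms}.

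The main obstacle I anticipate is bookkeeping around the inner automorphism group scheme and its $\Gamma$-action: one must check that $\Inn(\cH_{\fX,\alpha})$ is a well-behaved (smooth, affine, finite type) $(\cF_\Gamma)_{[\alpha]}$-group scheme carrying a $\Gamma$-action over $(\cF_\Gamma)_{[\alpha]}$ compatible with the $\cE_\Gamma$-action, so that Lemma~\ref{lem: 1-cocycles scheme} applies and the twisted action $*_c$ on the internal-hom scheme is defined; and that, when one restores a Galois extension $E/F$ as splitting field of a given inner form, the cocycle obtained genuinely lands in $\Inn$ and combines with $(\alpha_2)_{E/F}$ to the cocycle classifying $\mathbf{G}$. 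These are conceptually routine given \cite{Con14} and the lemmas already in hand, but they are where the care is needed; the disjoint-union/finite-type claims and the fiber computations are then formal.
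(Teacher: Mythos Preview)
Your overall strategy matches the paper's: enrich the quasi-split family of Lemma~\ref{lem: scheme quasi-split fixed r.d.} by a scheme of $\Inn$-valued $1$-cocycles (via Lemma~\ref{lem: 1-cocycles scheme}), take fixed points under the resulting twisted $\Gamma$-action, and pass to a finite disjoint union. Your fiber analyses for clauses \eqref{lem: versal family reductive fixed r.d.: 1} and \eqref{lem: versal family reductive fixed r.d.: 2} also follow the paper's.

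The one genuine wrinkle---which you yourself flag as the main obstacle---is the $\Gamma$-action on $\cI_{\fX,\alpha}=\Inn(\cH_{\fX,\alpha})$. As written this does not make sense: $\cH_{\fX,\alpha}$ is already the $*_{\alpha_2}$-fixed-point subscheme of $\cH'_{\fX,\Gamma}$, so it is simply a group scheme over $(\cF_\Gamma)_{[\alpha]}$ with no residual nontrivial $\Gamma$-action coming from $\cE_\Gamma$; there is thus no ``natural $\Gamma$-action on $\Inn(\cH_{\fX,\alpha})$ from $\cE_\Gamma$'' to feed into Lemma~\ref{lem: 1-cocycles scheme}. The paper resolves this by reversing the order of operations: it first forms the Weil restriction $\cR'_{\fX,\Gamma}\coloneqq\cH_\fX\,^{\wedge}_{\cF_\Gamma}\cE_\Gamma$, which \emph{does} carry a $\Gamma$-action over $\cF_\Gamma$ induced from the $\Gamma$-action on $\cE_\Gamma$, then invokes \cite[Theorem~7.1.9(1)]{Con14} to realize $\Inn(\cR'_{\fX,\Gamma})$ as a closed subgroup scheme of $\Aut_{\cF_\Gamma}(\cR'_{\fX,\Gamma})$ inheriting that action, and only then applies Lemma~\ref{lem: 1-cocycles scheme} to obtain $\cS_{\fX,\Gamma}=Z^1_{\Gamma,\Inn(\cR'_{\fX,\Gamma})}$. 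On fibers this recovers exactly the classical $Z^1(\Gamma,\Inn(\mathbf{G}^{qs}_E))$, since $\Inn(\mathbf{G}^{qs})(E)$ is the set of $F$-points of the corresponding Weil restriction. With this adjustment---Weil-restrict along $\cE_\Gamma/\cF_\Gamma$ \emph{before} forming $\Inn$---your plan and the paper's proof coincide; the remaining difference (your indexing by $D_\fX$ versus the paper's indexing over $\Gamma$ alone, the $[\alpha]$-index having already been absorbed into $\cH_\fX\to\cF_\fX$) is purely cosmetic.
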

Notice, that by Lemma \ref{lem: scheme quasi-split fixed r.d.}, we already have a versal family of quasi-split reductive groups with root datum $\fX$. Therefore, we are left to add all inner forms to our  construction (see e.g. \cite[19.57]{Milne_reductive_groups}).
\begin{proof}
    Let $\Gamma$ be a finite group, $\cH_\fX\to\cF_\fX$ be as in Lemma \ref{lem: scheme quasi-split fixed r.d.}, and set $\underline{\cR'_{\fX,\Gamma}}\coloneqq \underline{\cH_{\fX}\, ^{\,\wedge}_{\cF_{\Gamma}} \cE_\Gamma}$. Notice, that by Lemma \ref{lem: internal hom repr} it is representable by an affine scheme of finite type, which we denote by $\cR'_{\fX,\Gamma}$. Moreover, $\cR'_{\fX,\Gamma}$ is a connected reductive $\cF_\Gamma$-group scheme as for any algebraically closed field $k$ and a geometric point $\overline{x}\in\cF_{\Gamma}(k)$
    \[
        (\cR'_{\fX,\Gamma})_{\overline{x}}\cong(\cH_{\fX})_{k^{|\Gamma|}/k} \cong \prod_{i=1}^{|\Gamma|}(\cH_{\fX})_k.
    \]

    Therefore, by \cite[Theorem 7.1.9(1)]{Con14}, the functor $\underline{\Aut_{\cF_\Gamma}(\cR'_{\fX,\Gamma})}:\mathbf{Sch}_{\cF_\Gamma}\to\mathbf{Set}$ is representable by an $\cF_\Gamma$-scheme denoted by $\Aut_{\cF_\Gamma}(\cR'_{\fX,\Gamma})$. Moreover, the sub-functor $\underline{\Inn(\cR'_{\fX,\Gamma})}$  is representable by a closed $\cF_\Gamma$-subscheme of $\Aut_{\cF_\Gamma}(\cR'_{\fX,\Gamma})$, denoted by $\Inn(\cR'_{\fX,\Gamma})$. In addition, the scheme $\Inn(\cR'_{\fX,\Gamma})$ inherits the $\Gamma$-action from $\Aut(\cR'_{\fX,\Gamma})$, as can be directly checked on the functor of points.

    Set $\cS_{\fX,\Gamma}$ as the $\cF_\Gamma$-scheme,  $Z^1_{\Gamma,\Inn(\cR'_\fX)}$ from Lemma \ref{lem: 1-cocycles scheme} representing the 1-cocycles from $\Gamma$ to $\Inn(\cR'_{\fX,\Gamma})$, and set $\cR''_{\fX,\Gamma}\coloneqq\cR'_{\fX,\Gamma}\times_{\cF_{\Gamma}}    \cS_{\fX,\Gamma}$.

    We construct for each $\gamma \in \Gamma$ a morphism $*_\gamma:\cR''_{\fX,\Gamma} \to \cR'_{\fX,\Gamma}$, that correspond to Definition \ref{def: cocycle to action}, by: \[
        *_\gamma = \mu \circ (\gamma \times c_{\gamma}),
    \]
    where:
    \begin{enumerate}
        \item $c_\gamma:\cS_{\fX,\Gamma}\to\Inn(\cR'_{\fX,\Gamma})$ is the morphism from Lemma \ref{lem: 1-cocycles scheme};
        \item $\gamma :\cR'_{\fX,\Gamma}\to \cR'_{\fX,\Gamma}$ is the action of $\gamma$ by the $\Gamma$-action on $\cR'_{\fX,\Gamma}$;
        \item $\mu:\cR'_{\fX,\Gamma}\times\Inn(\cR'_{\fX,\Gamma})\to \cR'_{\fX,\Gamma}$ is the $\Inn(\cR'_{\fX,\Gamma})$-action on $\cR'_{\fX,\Gamma}$.
    \end{enumerate}

    Now, let $\cR_{\fX,\Gamma}$ be the limit of the diagram consists of these $\{*_\gamma\}_{\gamma  \in\Gamma}$. Notice, that $\cR_{\fX,\Gamma}$ inherits a $\cS_{\fX,\Gamma}$-structure from $\cR''_{\fX,\Gamma}$.

    Finally, with resemblance to the proof of Lemma \ref{lem: scheme quasi-split fixed r.d.}, denote $n_\fX:=C^{spt}(\dim (\fX))$, where $C^{spt}$ is the function given by \cite[Lemma B.1.1]{AA24}. Define
    \[
        \cS_\fX=\bigsqcup_{\Gamma \text{ group of index } \leq n_\fX} \cS_{\fX,\Gamma}
    \]
    and
    \[
        \cR_\fX=\bigsqcup_{\Gamma \text{ group of index } \leq n_\fX} \cR_{\fX,\Gamma}.
    \]We show that $\cR_\fX \to \cS_\fX$ satisfies the requirement of the lemma.

    For clause \eqref{lem: versal family reductive fixed r.d.: 1}, we may consider each $\cR_{\fX,\Gamma} \to \cS_{\fX,\Gamma}$ separately.  Let $x$ be a point in $\cS_{\fX,\Gamma}(F)$, and notice that we also get a unique point $s\in\cF_\Gamma(F)$ by composing with the structure morphism $\cS_{\fX,\Gamma}\to\cF_{\Gamma}$.

    Let $E$ be an {\'e}tale $F$-algebra such that $\Spec(E)\cong(\cE_\Gamma)_s$. Notice, that
    \[(\cR'_{\fX,\Gamma})_x\cong((\cH_{\fX,\Gamma})_{s})_{E/F},
    \]
    Moreover, $x$ correspond to a unique 1-cocycle $c_x$ such that
    $$c_x\in Z^1(\Gamma,\Inn(\cR'_{\fX,\Gamma})(F))=Z^1(\Gamma,\Inn(((\cH_{\fX,\Gamma})_s)_{E/F})(F))\hookrightarrow Z^1(\Gamma,\Aut(((\cH_{\fX,\Gamma})_s)_{E/F})(F))).$$
    In addition,
    \[
        (\cR''_{\fX,\Gamma})_x \cong
        (\cR'_{\fX,\Gamma})_x\times_{(\cF_{\Gamma})_s}(\cS_{\fX,\Gamma})_x\cong
        ((\cH_{\fX,\Gamma})_s)_{E/F}
    \]
    over $F$.

    For any $F$-scheme $T$, and a point in $(\cR''_{\fX,\Gamma})_x(T)$, our construction induce a unique 1-cocycle in $Z^1(\Gamma,\Aut(\cH_{\fX,\Gamma})_{E/F}(T)))$ as the base change of $c_x$ by the structure morphism $T\to \Spec(F)$. Therefore, for any $\gamma \in \Gamma$, $(*_\gamma)_x=*_{c_x}(\gamma)$ (recall Definition \ref{def: cocycle to action}), and$$ (\cR_{\fX,\Gamma})_x\cong  (((\cH_{\fX,\Gamma})_s)_{E/F})^{*_{c_x}(\Gamma)}.
    $$
    Finally, clause \eqref{lem: versal family reductive fixed r.d.: 1} follows from Lemmas \ref{lem: fam of finite etale algebras}\eqref{lem: fam of finite etale algebras: 1}  and  \ref{lem: etale extension reduction}.

    Now, for clause \eqref{lem: versal family reductive fixed r.d.: 2}, notice that by \cite[19.57]{Milne_reductive_groups} $\mathbf{G}$ has a unique quasi-split inner $F$-form, which we denote by $\mathbf{G}^{qs}$. Therefore, $\mathbf{G}_E\cong\mathbf{G}^{qs}_E$ for some finite Galois extension $E/F$.  Denote $\Gamma\coloneqq\Gal(E/F)$, and notice that as $G^{qs}$ is an inner $F$-form, the image of the induced 1-cocycle $c'\in Z^1(\Gamma,\Aut(\mathbf{G}_E))$ lies in $\Inn(\mathbf{G}_E)$.

    Now, by Lemma \ref{lem: scheme quasi-split fixed r.d.} we get a point $s\in\cF_\Gamma(F)$ such that $(\cH_\fX)_s\cong\mathbf{G}^{qs}$ as the root datum of $\mathbf{G}^{qs}$ is isomorphic to the root datum of $\mathbf{G}$. Therefore, we get an isomorphism $((\cH_\fX)_s)_E\cong \mathbf{G}_E^{qs}\cong \mathbf{G}_E$, that induces a 1-cocycle  $c\in Z^1(\Gamma,\Aut(((\cH_\fX)_s)_E)$ (see Remark \ref{def: induced cocycle}).

    One can check on the associated functor of point, that the image of $c$ is in $\Inn(((\cH_\fX)_s)_E)$. Therefore, $c$ induce a point $x\in\cS_{\fX,\Gamma}(F)$. The verification that $x$ satisfies clause \eqref{lem: versal family reductive fixed r.d.: 2}, namely that $\mathbf{G}\cong(\cR_\fX)_x$, is similar to Lemma \ref{lem: scheme quasi-split fixed r.d.} and uses the proof of clause \eqref{lem: versal family reductive fixed r.d.: 1} above.
\end{proof}
We have constructed everything needed to prove the main result of this paper.

\begin{theorem}\label{thm: versal family reductive}
    For any integer $n>0$, there exists a scheme $\cS _n$ of finite type and a smooth  $\cS_n$-group scheme of finite type $\cR_n\to \cS_n$ such that:
    \begin{enumerate}
        \item \label{thm: versal family reductive: 1}For any field $F$ and every $x\in\cS_n(F)$, the group $(\cR_n)|_x$ is connected and reductive of rank at most $n$. In particular, $\cR_n$  is a reductive $\cS_n$-group scheme.
        \item \label{thm: versal family reductive: 2}For any connected and reductive group $\mathbf{G}$, of rank at most $n$ over a field $F$,  there exists a point  $x\in \cS_n(F)$, such that $\mathbf{G}\cong (\cR_n)|_x$.
    \end{enumerate}
\end{theorem}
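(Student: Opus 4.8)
The plan is to deduce the theorem from Lemma \ref{lem: versal family reductive fixed r.d.} by a finite disjoint union over root data, in exact analogy with the way Theorem \ref{thm: Scheme of quasi-split} was deduced from Lemma \ref{lem: scheme quasi-split fixed r.d.}. First I would invoke \cite[Lemma B.3.1]{AA24} to fix a finite set of representatives for the isomorphism classes of root data of rank at most $n$. For each such root datum $\fX$, Lemma \ref{lem: versal family reductive fixed r.d.} provides a finite-type scheme $\cS_\fX$ together with a smooth, $\cS_\fX$-affine group scheme of finite type $\cR_\fX\to\cS_\fX$ whose field-valued fibers are precisely the connected reductive groups with absolute root datum $\fX$. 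I would then set
\[
  \cS_n \coloneqq \bigsqcup_{\text{root datum }\fX\text{ of rank }\leq n}\cS_\fX,
  \qquad
  \cR_n \coloneqq \bigsqcup_{\text{root datum }\fX\text{ of rank }\leq n}\cR_\fX,
\]
with $\cR_n\to\cS_n$ the disjoint union of the structure morphisms $\cR_\fX\to\cS_\fX$.

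Next I would verify the two clauses. Because the index set is finite, $\cS_n$ is of finite type, and since smoothness, affineness and the finite-type property of a morphism are all Zariski-local on the target, the morphism $\cR_n\to\cS_n$ — whose restriction to each component $\cS_\fX$ is $\cR_\fX\to\cS_\fX$ — is smooth, affine and of finite type. For clause \eqref{thm: versal family reductive: 1}: a point $x\in\cS_n(F)$ factors through exactly one component $\cS_\fX$ (the spectrum of a field is connected), so $(\cR_n)|_x\cong(\cR_\fX)|_x$, which by Lemma \ref{lem: versal family reductive fixed r.d.}\eqref{lem: versal family reductive fixed r.d.: 1} is connected and reductive with absolute root datum $\fX$; since the rank of $\fX$ is at most $n$, so is the rank of $(\cR_n)|_x$. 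The same reasoning applied to geometric points shows the geometric fibers of $\cR_n\to\cS_n$ are connected reductive, so, being also smooth and affine, $\cR_n$ is a reductive $\cS_n$-group scheme in the sense of Definition \ref{def: reductive over S}.

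For clause \eqref{thm: versal family reductive: 2}: given a connected reductive group $\mathbf{G}$ of rank at most $n$ over a field $F$, its absolute root datum $\fX$ has rank at most $n$, hence is isomorphic to one of the chosen representatives and therefore occurs in the disjoint union. By Lemma \ref{lem: versal family reductive fixed r.d.}\eqref{lem: versal family reductive fixed r.d.: 2} there is a point $x\in\cS_\fX(F)\subseteq\cS_n(F)$ with $\mathbf{G}\cong(\cR_\fX)|_x=(\cR_n)|_x$, as required.

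Since all the real work is already carried out in Lemma \ref{lem: versal family reductive fixed r.d.} and in the finiteness statement \cite[Lemma B.3.1]{AA24}, I do not expect a genuine obstacle here. The only points that deserve an explicit line are that the relevant properties (smoothness, affineness, finite type of the morphism, and connected reductive geometric fibers) are local on the base and hence survive a finite disjoint union, and that the condition ``rank at most $n$'' on $\mathbf{G}$ corresponds precisely to the condition ``rank of $\fX$ at most $n$'' that indexes the union, so that no root datum needed for versality is omitted.
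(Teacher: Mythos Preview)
Your proposal is correct and follows essentially the same approach as the paper: invoke \cite[Lemma B.3.1]{AA24} for finiteness of root data of bounded rank, take the disjoint union of the families $\cR_\fX\to\cS_\fX$ from Lemma \ref{lem: versal family reductive fixed r.d.}, and observe that the two clauses are inherited componentwise. Your write-up is in fact more detailed than the paper's own proof, which simply records the disjoint unions and notes that clause \eqref{thm: versal family reductive: 1} is local and clause \eqref{thm: versal family reductive: 2} holds because all relevant root data occur.
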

The method of the proof strongly resembles the method used to generalize Lemma \ref{lem: scheme quasi-split fixed r.d.} to Theorem \ref{thm: Scheme of quasi-split}.

\begin{proof}[Proof of Theorem \ref{thm: versal family reductive}]
    By \cite[Lemma B.3.1]{AA24} there are finitely many root data up to a given rank. Let $\cR_\fX\to \cS_\fX$ be as in Lemma \ref{lem: versal family reductive fixed r.d.}. Set
    \[
        \cS_n \coloneqq
        \bigsqcup_{\text{root datum } \fX \text{ of rank } \leq\,n }
        \cS_{\fX},
    \]
    and
    \[
        \cR_n \coloneqq
        \bigsqcup_{\text{root datum } \fX \text{ of rank } \leq\,n }
        \cR_{\fX}.
    \]
    Now, clause \eqref{thm: versal family reductive: 1} follows as it is a local property, and clause \eqref{thm: versal family reductive: 2} follows as the union runs over all required root data.
\end{proof}
\begin{remark}
    Notice, that we could have bypassed the quasi-split construction of Lemma \ref{lem: scheme quasi-split fixed r.d.} and Theorem \ref{thm: Scheme of quasi-split}, as the functor $\underline{\Aut(\cG_{\fX})}$ is itself representable, and we could have constructed the scheme of Theorem \ref{thm: versal family reductive} directly from the representing scheme of $\underline{\Aut(\cG_{\fX})}$. We chose to split our proof to give a more modular construction, and to establish Theorem \ref{thm: Scheme of quasi-split}, as it has a significance of its own.
\end{remark}

\bibliographystyle{alpha}
\bibliography{VersalFamilyOfReductiveGroups.bib}

\end{document}